\newtheorem{theo}{Theorem}[section]  
\newtheorem{coro}[theo]{Corollary}
\newtheorem{prop}[theo]{Proposition}
\newtheorem{rem}[theo]{Remark}
\newtheorem{exm}[theo]{Example}
\theoremstyle{definition}
\newtheorem{defi}[theo]{Definition}
\begin{document}

\title{First-Order Modal  Logic: \\
Frame Definability and Lindstr\"om Theorems}
\author{R. Zoghifard\footnote{Department of Mathematics and Computer Science, Amirkabir University of Technology, Tehran, Iran. E-mail: r.zoghifard@aut.ac.ir.}\\ M. Pourmahdian\footnote{Department of Mathematics and Computer Science, Amirkabir University of Technology, Tehran, Iran  and School of Mathematics, Institute for Research in Fundamental Sciences (IPM), Tehran, Iran.  E-mail: pourmahd@ipm.ir.}}

\date{}
\maketitle

\begin{abstract}
This paper involves generalizing the Goldblatt-Thomason and the Lindstr\"om characterization theorems to first-order modal logic.
\end{abstract}

\textit{Keywords:} First-order modal logic, Kripke semantics, Bisimulation, Goldblatt-Thomason theorem, Lindstr\"om theorem.

\section{Introduction and Preliminaries}
 The purpose of this study  is to extend two well-known theorems of propositional modal logic, namely the Goldblatt-Thomason and the Lindstr\"om characterization theorems, to first-order modal logic. 
 First-order modal logic (FML)  provides  a framework  for incorporating both propositional modal and  classical first-order logics (FOL).  This, from a model theoretic point of view,  means that a  Kripke  frame can be  expanded by a non-empty set  as a domain over which the quantified variables range.  In particular, first-order Kripke models  subsume both propositional Kripke models and classical first-order structures. 

The celebrated Goldblatt-Thomason theorem provides a model theoretic characterization of elementary classes of frames which are definable by a set of propositional modal sentences. This theorem states that an elementary class of Kripke frames is definable by a set of propositional modal formulas if and only if it is closed under bounded morphic images, generated subframes and disjoint unions and reflects ultrafilter extensions.  Our first objective here is to provide a formulation of this theorem in FML.

The second aim of this paper is to study Lindstr\"om type theorems for first-order modal logic. These theorems determine the maximal expressive power of logics in terms of model theoretic concepts.  In a seminal paper \cite{per:elem69}, Lindstr\"om  proved that any abstract logic extending first-order logic with  compactness and the L\"owenheim-Skolem property is not more expressive than the first-order logic. 

To formulate our problems in technical terms, it is worth it to have a quick overview of  basic notions of FML. Suppose $\tau$ is a language consisting of countable numbers of countably many constant and relational symbols. The usual syntactical conventions of first-order logic are assumed here. In particular,  the notions of $\tau$-terms and $\tau$-atomic formulas are defined in the usual way. Furthermore,  first-order modal $\tau$-formulas are defined inductively in the usual pattern as follows:
\[
\phi := P(t_1,\dots ,t_n)\ |\ t_1=t_2\ |\ \neg\phi\ |\ \phi\wedge\phi\ |\ \exists x\phi(x)\ |\ \lozenge\phi,
\]
where $P$ is an $n$-ary predicate and $t_1,\dots, t_n$ are $\tau$-terms. Bound and free variables in a formula are defined as in first-order logic.  A $\tau$-sentence is a formula without any free variable. The other logical connectives ($\vee$, $\rightarrow$, $\leftrightarrow$, $\forall$), and the modal operator $\square$ have their standard definitions. 

A \textit{constant domain Kripke model} is a quadruple $\mathfrak{M}=(W, R, D, I)$, where $W\neq\emptyset$ is a set of possible worlds, $R\subseteq W\times W$ is an accessibility relation, and $D\neq\emptyset$ is a domain. Moreover,
\begin{description}
\item[i.] for each $w\in W$ and any $n$-ary predicate symbol $P$, $I(w,P)\subseteq D^n$,
\item[ii.] for any $w,w'\in W$ and each constant $c$, $I(w,c)=I(w',c)\in D$.
\end{description}
More generally, a \textit{varying domain Kripke model} is a tuple $\mathfrak{M} = (W, R, D, I,\{D(w)\}_{w\in W})$, where $(W,R,D,I)$ is a constant domain model and for each $w\in W$, $D(w)\neq\emptyset$ is a domain of $w$ such that $D=\bigcup_{w\in W} D(w)$. The pair $\mathfrak{F}=(W,R)$ and the triple  $\mathfrak{S}=(W,R,D)$  are respectively called  \textit{frame} and  \textit{skeleton}. A Kripke model $\mathfrak{M}=(W, R, D, I)$ is said to be based on the frame $\mathfrak{F}=(W,R)$ or skeleton $\mathfrak{S}=(W,R,D)$.
For a given first-order Kripke model $\mathfrak{M}$ and  any $w\in W$, the pair $(\mathfrak{M},w)$ is called a pointed  model.

An \textit{assignment} is a function $\sigma$, which assigns to each variable $v$ an element $\sigma(v)$ inside $D$. Two assignments $\sigma$ and $\sigma'$ are $x$-\textit{variant} if $\sigma(y) = \sigma'(y)$ for all variables $y \neq x$. We denote this by $\sigma \sim_x \sigma'$.

The notation $t^{\mathfrak{M},\sigma}$ is used for the interpretation of $t$ in $\mathfrak{M}$ under assignment $\sigma$.

Let $w$ be an arbitrary world of a varying domain model $\mathfrak{M}$ and $\sigma$ be any assignment. For a $\tau$-formula $\phi(\bar{x})$, The notion of ``$\phi(\bar{x})$ is satisfied at $w$ with respect to $\sigma$'' is defined inductively as follows.
\begin{itemize}
\item $\mathfrak{M},w\models_\sigma t_1=t_2$  if and only if  $t_1^{\mathfrak{M},\sigma}=t_2^{\mathfrak{M},\sigma}$.
\item $\mathfrak{M},w\models_\sigma P(t_1,\dots, t_n)$  if and only if  $(t_1^{\mathfrak{M},\sigma},\dots, t_n^{\mathfrak{M},\sigma})\in I(w,P)$, $P$ is an $n$-ary predicate.
\item $\mathfrak{M},w\models_\sigma \neg\phi$ if and only if  $\mathfrak{M},w\not\models_\sigma \phi$.
\item $\mathfrak{M},w\models_\sigma \phi\wedge\psi$ if and only if  $\mathfrak{M},w\models_\sigma \phi$ and $\mathfrak{M},w\models_\sigma \psi$.
\item $\mathfrak{M},w\models_\sigma \exists x\phi(x)$ if and only if there is an $x$-variant $\sigma'$ of $\sigma$ with $\sigma'(x)\in D(w)$ and $\mathfrak{M},w\models_{\sigma'} \phi(x)$.
\item $\mathfrak{M},w\models_\sigma \lozenge\phi$  if and only if  for some $w'\in W$ with $wRw'$, $\mathfrak{M},w'\models_\sigma\phi$.
\end{itemize}

Note that while $I(w,P)$ is not assumed to be a subset of $D(w)$, the existential formulas are  satisfied in world $w$  exactly when a witness can be found within $D(w)$.\footnote{In some literature, e.g \cite{benthem:frame10} and \cite{gabbay:quant09}, the satisfaction of predicates in any world is related to its domain that is $P(a_1,\dots, a_n)$ is true in $w$ if and only if $a_i\in D(w)$ and $(a_1,\dots , a_n)\in I(w,P)$. But we accept the notion which is used in many other references like \cite{fitting:first98} and \cite{braun:first06}. }

Let $\mathfrak{M}$ and $\mathfrak{N}$ be two first-order Kripke models. We call $\mathfrak{M}$ a \textit{submodel} of $\mathfrak{N}$ whenever
\begin{description}
\item[i.] $\mathfrak{F}_\mathfrak{M}$ is a subframe of $\mathfrak{F}_\mathfrak{N}$, that is  $W_\mathfrak{M}\subseteq W_\mathfrak{N}$ and  $R_\mathfrak{M}=R_\mathfrak{N}\upharpoonright W_\mathfrak{M}$.   
\item[ii.] For any $w\in W_\mathfrak{M}$, $D_\mathfrak{M}(w)\subseteq D_\mathfrak{N}(w)$.
\item[iii.] For any $w\in W_\mathfrak{M}$ and any $n$-ary predicate $P$, $I_\mathfrak{M}(w,P)=I_\mathfrak{N}(w,P) \cap D^n_\mathfrak{M}$. Also, $I_\mathfrak{M}(c)=I_\mathfrak{N}(c)$ for all constant symbols $c$.
\end{description}
In this situation we use the notation $\mathfrak{M}\subseteq \mathfrak{N}$.

The model $\mathfrak{M}$ is an \textit{elementary submodel} of $\mathfrak{N}$, denoted by $\mathfrak{M}\preceq \mathfrak{N}$, whenever $\mathfrak{M}\subseteq \mathfrak{N}$ and for any $w\in W_\mathfrak{M}$, $a_1,\dots,a_n\in D_\mathfrak{M}$ and any formula $\phi(x_1,\dots,x_n)$,
$$\mathfrak{M},w\models\phi(a_1,\dots,a_n)\ \text{ if and only if }\ \mathfrak{N},w\models\phi(a_1,\dots,a_n).$$

Now the first part of  this paper is devoted to considering the question of  which elementary classes of frames are definable by a set of first-order modal sentences. This is  a natural generalization of the Goldblatt-Thomason theorem  which was also studied by many authors for various extensions of propositional modal logic. (For example \cite{cate:model05} for hybrid logic, \cite{sano:gtgraded10} for graded modal logic, and for some other extensions \cite{let:generalgt08}.)

Here we answer this question fully by the following theorem (Theorem \ref{gt}).

\textbf{Theorem.} 
\textit{Let $K$ be an elementary class of frames. Then $K$ is  definable  by a set of first-order modal sentences if and only if it is closed under bounded morphic images, generated subframes,  and disjoint unions.}

As we have mentioned the second target of our research is to maintain some versions of Lindstr\"om theorem for first-order modal logic. 
The original theorem of Lindstr\"om has been also extended to propositional modal logic.  De Rijke in \cite{rijke:extend93, rijke:lind95} showed that any abstract logic extending modal logic with the finite depth property is equivalent to modal logic. 
 Van Benthem in \cite{benthem:lind07} showed that the finite depth property is captured by compactness, relativization and bisimulation invariance,  improving de Rijke's result. 
  This result shows that the propositional modal logic is the strongest logic satisfying compactness and the relativization property and is invariant under bisimulation.  Later, van Benthem, ten Cate and V\"a\"an\"anen in \cite{benthem:lind09} proved a Lindstr\"om theorem for some fragments of first-order logic.   Since  the finite depth property fails for modal logic with global modality and guarded fragment of first-order logic, Otto and Piro in \cite{otto:lind08} proved the Lindstr\"om theorem for these logics by using compactness, corresponding bisimulation invariance and the Tarski union property.   By generalizing this method, Enqvist in \cite{enq:general13}  proved a version of Lindstr\"om's theorem for any normal modal logic corresponding to  a class of Kripke frames definable by a set of strict universal Horn formulas.   Also Venema and Kurz in \cite{venema:coalg10} used  coalgebraic methods for proving a version of Lindstr\"om's theorem. Enqvist in \cite{enq:new14} proved a coalgebraic Lindstr\"om theorem that generalizes the van Benthem's result.

Here we show that first-order modal logic with respect to constant domain Kripke models is a logic which has the  maximal expressive power among the logics satisfying the compactness property and is invariant under bisimulation.

Furthermore, using a different method, based on Tarski union property, we give a Lindstr\"om theorem with respect to class of varying domain Kripke models. 

It is worth mentioning that the key point in proving both results here is to view first-order Kripke models as first-order structures in a suitable language in which the basic properties of these structures can be expressed. Of course the expressive power of FML comparing to propositional modal logic is the key advantage which allows us to prove both the Goldblatt-Thomason and the Lindstr\"om Theorems. We must indicate that this well-known method is specified as modal correspondence theory.

The organization of this paper is as follows. In section \ref{sec model} some basic model theoretic results for FML are proved. Also, the notion of saturated models is reviewed and  a version of the Hennessy-Milner theorem for FML is established. These developments yield us some versions of the Goldblatt-Thomason theorem for FML. 

In section \ref{character}, at first the notion of abstract logic is reviewed.  In subsection \ref{seclind}, by considering only constant domain Kripke models we adapt the original proof of  Lindstr\"om's theorem for first-order logic in an elaborate way to prove the corresponding Lindstr\"om  theorem for first-order modal logic.  
In subsection \ref{other lind} a version of Lindstr\"om theorem is given with respect to varying domain Kripke models based on the Tarski union property method.

We will assume that the reader is familiar with the basic notions of propositional modal logic. Also, for further reading on first-order modal logic we refer to \cite{cress:modal01, fitting:first98, braun:first06}.

\section{Model Theory of First-Order Modal Logic}\label{sec model}

In this section some model theoretic  notions for FML are introduced. These methods, in particular allow us  to extend a version  of the Goldblatt-Thomason theorem to first-order modal logic.

\subsection{Bisimulation and Invariance Theorems}\label{sec bisim}

In this subsection the key notions  of bisimulation and saturation are discussed for first-order Kripke models and some of their model theoretic properties are explored. These central concepts play an important role in proving the Lindstr\"om theorem for FML.   

The bisimulation notion is obtained by combining the usual definition of bisimulation for the propositional modal logic with the corresponding  notion of potential isomorphism from first-order logic. 
See Chapter 11 of \cite{benthem:open10}, in which a similar notion of world-object bisimulation for certain varying domain models, called cumulation domain, 
 is given.

Let $D^\star$ be the set of all finite sequences over $D$. For $w\in W$ and $a_1\dots a_n\in D^\star$ we use the abbreviation $w\bar{a}$ instead of $(w,a_1\dots a_n)$. 

All Kripke models in this subsection are assumed to be varying domain.

\begin{defi}\label{bisim}
Let $\mathfrak{M}$  and $\mathfrak{N}$ be two Kripke models. The relation $Z\subseteq (W_\mathfrak{M}\times D_\mathfrak{M}^\star)\times (W_\mathfrak{N}\times D_\mathfrak{N}^\star)$  is called a \textit{bisimulation} if  for any $((w,\bar{a}),(v,\bar{b}))\in Z$ with $|\bar{a}|=|\bar{b}|$  the following three conditions hold:
\begin{enumerate}
\item for any atomic formula $\phi(\bar{x})$, we have $\mathfrak{M},w\models \phi(\bar{a})$ if and only if $\mathfrak{N},v\models \phi(\bar{b})$.
\item \begin{description}
\item ($\lozenge$-forth) For all $w'\in W_\mathfrak{M}$ with $wR_\mathfrak{M}w'$ there is $v'\in W_\mathfrak{N}$ such that $vR_\mathfrak{N}v'$ and $(w',\bar{a}) Z (v',\bar{b})$.
\item ($\lozenge$-back) For all $v'\in W_\mathfrak{N}$ with $vR_\mathfrak{N}v'$ there is $w'\in W_\mathfrak{M}$ such that $wR_\mathfrak{M}w'$ and $(w',\bar{a}) Z (v',\bar{b})$.
\end{description}  
\item \begin{description}
\item ($\exists$-forth) For any $c\in D_\mathfrak{M}(w)$ there is $d\in D_\mathfrak{N}(v)$ with $(w,\bar{a}c) Z (v,\bar{b}d)$.
\item ($\exists$-back) For any $d\in D_\mathfrak{N}(v)$ there is $c\in D_\mathfrak{M}(w)$ with $(w,\bar{a}c) Z (v,\bar{b}d)$.
\end{description}
\end{enumerate}
\end{defi}
Also call $w\bar{a}$ and $v\bar{b}$  bisimilar if there is a bisimulation $Z$ between $\mathfrak{M}$  and $\mathfrak{N}$ such that $(w,\bar{a}) Z (v,\bar{b})$. In this situation we use the notation $(\mathfrak{M},w\bar{a})\rightleftarrows(\mathfrak{N},v\bar{b})$. By $(w,v)\in Z$ we mean that $(w\lambda,v\lambda)\in Z$ where $\lambda$ is the empty sequence in $D^\star$.
 
Note that the notion of bisimulation generalizes both the notion of bisimulation for propositional modal logic and potential isomorphism for first-order logic. 
The following proposition can then be proved by induction on the complexity of formulas.

\begin{prop}\label{bis}
Suppose  $(\mathfrak{M},w\bar{a})\rightleftarrows(\mathfrak{N},v\bar{b})$. Then for any formula $\phi(\bar{x})$ with $|\bar{x}|=|\bar{a}|=|\bar{b}|$, 
$$\mathfrak{M},w\models\phi(\bar{a})\ \text{ if and only if }\ \mathfrak{N},v\models\phi(\bar{b}).$$
\end{prop}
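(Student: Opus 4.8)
The plan is to argue by induction on the structure of $\phi$, carrying the induction over \emph{all} pairs related by a fixed witnessing bisimulation rather than just the given one. Concretely, fix a bisimulation $Z$ between $\mathfrak{M}$ and $\mathfrak{N}$ with $(w,\bar a)\,Z\,(v,\bar b)$; since every pair occurring in $Z$ is itself bisimilar via $Z$, it suffices to prove by induction on $\phi$ that for every $\bigl((w',\bar a'),(v',\bar b')\bigr)\in Z$ and every $\tau$-formula $\phi(\bar x)$ with $|\bar x|=|\bar a'|=|\bar b'|$ we have $\mathfrak{M},w'\models\phi(\bar a')$ iff $\mathfrak{N},v'\models\phi(\bar b')$. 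For the base case, recall that in this language a $\tau$-term is either one of the variables $\bar x$ or a constant symbol, so under the assignment $\bar x\mapsto\bar a'$ each term of an atomic $\phi$ is interpreted either as a component of $\bar a'$ or as $I_\mathfrak{M}(c)$ (which is rigid); hence the required equivalence for atomic $\phi$ is exactly clause (1) of Definition \ref{bisim}, using that $Z$ relates $\bar a'$ and $\bar b'$ componentwise in the same order.

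The Boolean cases $\phi=\neg\psi$ and $\phi=\psi_1\wedge\psi_2$ follow at once from the induction hypothesis applied to $\psi$, respectively to $\psi_1$ and $\psi_2$ (whose free variables are still among $\bar x$). For $\phi(\bar x)=\lozenge\psi(\bar x)$: if $\mathfrak{M},w'\models\lozenge\psi(\bar a')$, choose $w''$ with $w'R_\mathfrak{M}w''$ and $\mathfrak{M},w''\models\psi(\bar a')$; the $(\lozenge\text{-forth})$ clause yields $v''$ with $v'R_\mathfrak{N}v''$ and $(w'',\bar a')\,Z\,(v'',\bar b')$, so the induction hypothesis gives $\mathfrak{N},v''\models\psi(\bar b')$ and hence $\mathfrak{N},v'\models\lozenge\psi(\bar b')$; the converse uses $(\lozenge\text{-back})$ symmetrically. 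For $\phi(\bar x)=\exists y\,\psi(\bar x,y)$ with $y$ chosen outside $\bar x$: if $\mathfrak{M},w'\models\exists y\,\psi(\bar a',y)$ there is a witness $c\in D_\mathfrak{M}(w')$ with $\mathfrak{M},w'\models\psi(\bar a',c)$; by $(\exists\text{-forth})$ there is $d\in D_\mathfrak{N}(v')$ with $(w',\bar a'c)\,Z\,(v',\bar b'd)$, and since $|\bar a'c|=|\bar b'd|$ the induction hypothesis applies to $\psi$ and gives $\mathfrak{N},v'\models\psi(\bar b',d)$; as $d\in D_\mathfrak{N}(v')$ this yields $\mathfrak{N},v'\models\exists y\,\psi(\bar b',y)$, and the converse uses $(\exists\text{-back})$.

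The argument is essentially routine; the two points deserving care are (i) setting up the induction as a statement about every $Z$-related pair, so that the modal and quantifier cases have a hypothesis to invoke at the successor world or after the parameter is extended, and (ii) in the $\exists$-case, noting that membership of the witness in the \emph{correct} world-domain ($D_\mathfrak{M}(w')$ on one side, $D_\mathfrak{N}(v')$ on the other) is precisely what the $(\exists\text{-forth})$ and $(\exists\text{-back})$ clauses deliver, which is why varying domains present no difficulty. I do not expect any genuine obstacle beyond this bookkeeping.
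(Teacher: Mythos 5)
Your proposal is correct and is exactly the argument the paper intends: the paper proves Proposition \ref{bis} by induction on the complexity of formulas, and your write-up fills in the standard details (strengthening the induction to all $Z$-related pairs and using clauses (1)--(3) of Definition \ref{bisim} for the atomic, $\lozenge$, and $\exists$ cases, with the witness correctly placed in $D(w')$ resp.\ $D(v')$).
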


One of the key properties in first-order logic  is that for a given finite language $\tau$, two first-order $\tau$-structures $\mathfrak{M}$ and $\mathfrak{N}$ are elementary equivalent if and only if they are $k$-partially isomorphic  for all $k \in \mathbb{N}$. One of the advantages of the above definition of bisimilarity  is that it helps generalizing this property of first-order logic to the  present context. To this end, for every pair  of natural numbers $l,r$ the concept of $(l,r)$-bisimilarity is defined.

Let $\leq$ be a componentwise order on $\mathbb{N}\times\mathbb{N}$; that is, $(l,r)\leq (l',r')$ if and only if $l\leq l'$ and $r\leq r'$. 

\begin{defi}[$(l,r)$-Bisimulation]
Let $(\mathfrak{M},w)$ and $(\mathfrak{N},v)$ be two pointed Kripke models and  $\bar{a}\in D_\mathfrak{M}^\star$ and $\bar{b}\in D_\mathfrak{N}^\star$ be sequences of the same length. We say that  $w\bar{a}$ and $v\bar{b}$ with $|\bar{a}|=|\bar{b}|$ are $(l,r)$-bisimilar, and denote it by $(\mathfrak{M},w\bar{a}) \rightleftarrows_{(l,r)} (\mathfrak{N},v\bar{b})$, if there exists a sequence of relations $\langle Z_{(i,j)}: 0\leq i\leq l, 0\leq j\leq r\rangle$ such that:
\begin{description}
\item (i) $w\bar{a}Z_{(l,r)} v\bar{b}$.
\item (ii) If $w'\bar{c} Z_{(0,0)} v'\bar{d}$ then  $|\bar{c}|=|\bar{d}|$ and $\mathfrak{M},w'\models \phi(\bar{c})$ if and only if $\mathfrak{N},v'\models \phi(\bar{d})$,  for any atomic formula $\phi(\bar{x})$.
\item (iii) $\lozenge$-forth and $\lozenge$-back: if $w'\bar{c} Z_{(i+1,j)} v'\bar{d}$ and $w' R_\mathfrak{M} w''$, then there is $v''\in W_\mathfrak{N}$ with $v' R_\mathfrak{N} v''$ such that $w''\bar{c} Z_{(i,j)} v''\bar{d}$ and vice versa.
\item  (iv) $\exists$-forth and $\exists$-back: if $w'\bar{c} Z_{(i,j+1)} v'\bar{d}$ and $c'\in D_\mathfrak{M}(w')$, then there is $d'\in D_\mathfrak{N}(v')$ with $w'\bar{c}c' Z_{(i,j)} v'\bar{d}d'$ and vice versa.
\end{description} 
\end{defi}

Regarding the definition above, in the following we define degree and a function that assigns to each formula a pair of natural numbers. 

\begin{defi}
For the first-order modal formula $\phi(\bar{x})$ the \textit{degree} of  $\phi$ is inductively defined as follows.
\begin{itemize}
\item $\delta(\phi(\bar{x}))=(0,0)$, for every atomic formula $\phi$.
\item $\delta(\neg\phi)=\delta(\phi)$.
\item If $\delta(\phi)=(l,r)$ and $\delta(\psi)=(l',r')$, then 
 $\delta(\phi\wedge\psi) = (\text{Max}(l,l') , \text{Max}(r,r'))$.
\item If $\delta(\phi)=(l,r)$, then $\delta(\lozenge\phi)=(l+1,r)$.
\item If $\delta(\phi)=(l,r)$, then $\delta(\exists x\phi(x))=(l,r+1)$.
\end{itemize}
\end{defi}

Assume that $\Gamma$ is any set of first-order modal formulas with at most $n$ free variables among $\bar{x}=x_1,\dots,x_n$. Let $(\mathfrak{M},w)$ and $(\mathfrak{N},v)$  be two pointed  Kripke models with $w\in W_\mathfrak{M}$, $\bar{a}\in D_\mathfrak{M}$, $v\in W_\mathfrak{N}$  and $\bar{b}\in D_\mathfrak{N}$ where $|\bar{a}|=|\bar{b}|=n$. The notation  $(\mathfrak{M},w\bar{a})\equiv_\Gamma (\mathfrak{N},v\bar{b})$ means that for any $\phi(\bar{x})\in\Gamma$, $\mathfrak{M},w\models\phi(\bar{a})$ if and only if $\mathfrak{N},v\models\phi(\bar{b})$. 
If $\Gamma$ is a set of first-order modal formulas all with at most $n$ free variables among $\bar{x}$, we omit the subscript $\Gamma$ and  write $(\mathfrak{M},w\bar{a})\equiv(\mathfrak{N},v\bar{b})$. 
For $\Gamma=\{\phi(\bar{x})\ | \delta(\phi)\leq(l,r)\}$ the notation above is $(\mathfrak{M},w\bar{a})\equiv_{(l,r)} (\mathfrak{N},v\bar{b})$.
Finally, two pointed Kripke models $(\mathfrak{M},w)$ and $(\mathfrak{N},v)$ are \textit{elementary equivalent}, $(\mathfrak{M},w)\equiv(\mathfrak{N},v)$, if for any sentence $\phi$, we have $\mathfrak{M},w\models \phi$ if and only if $\mathfrak{N},v\models \phi$. 

\begin{prop}\label{fbisim}
Let $(\mathfrak{M},w)$ and $(\mathfrak{N},v)$ be two pointed Kripke models. If $(\mathfrak{M},w\bar{a})\rightleftarrows_{(l,r)}(\mathfrak{N},v\bar{b})$, then $(\mathfrak{M},w\bar{a})\equiv_{(l,r)}(\mathfrak{N},v\bar{b})$.
\end{prop}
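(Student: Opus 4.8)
The plan is to establish the proposition by induction on the structure of the formula, but with a strengthened inductive statement in which the pair $(i,j)$ is allowed to vary along the induction. Fix a sequence $\langle Z_{(i,j)} : 0\le i\le l,\ 0\le j\le r\rangle$ witnessing $(\mathfrak{M},w\bar{a})\rightleftarrows_{(l,r)}(\mathfrak{N},v\bar{b})$. I would prove the following claim: for every first-order modal formula $\psi$ and every $(i,j)\le(l,r)$ with $\delta(\psi)\le(i,j)$, whenever $w'\bar{c}\,Z_{(i,j)}\,v'\bar{d}$ one has $\mathfrak{M},w'\models\psi(\bar{c})$ if and only if $\mathfrak{N},v'\models\psi(\bar{d})$. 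Proposition~\ref{fbisim} is then the special case $(i,j)=(l,r)$, $w'\bar{c}=w\bar{a}$, $v'\bar{d}=v\bar{b}$, using clause (i) of the definition to supply $w\bar{a}\,Z_{(l,r)}\,v\bar{b}$.

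The induction on $\psi$ runs as follows. For $\psi$ atomic we have $\delta(\psi)=(0,0)$, and the required equivalence (together with $|\bar{c}|=|\bar{d}|$) is exactly clause (ii), after descending from $Z_{(i,j)}$ to $Z_{(0,0)}$. The cases $\psi=\neg\chi$ and $\psi=\chi_1\wedge\chi_2$ are routine, since every immediate subformula has degree $\le\delta(\psi)\le(i,j)$, so the induction hypothesis applies at the \emph{same} pair $(i,j)$. For $\psi=\lozenge\chi$ with $\delta(\chi)=(l',r')$, from $\delta(\psi)=(l'+1,r')\le(i,j)$ we get $i\ge 1$; writing $i=i'+1$ we have $(l',r')\le(i',j)$. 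If $\mathfrak{M},w'\models\lozenge\chi(\bar{c})$, choose $w''$ with $w'R_\mathfrak{M}w''$ and $\mathfrak{M},w''\models\chi(\bar{c})$; clause (iii) ($\lozenge$-forth) yields $v''$ with $v'R_\mathfrak{N}v''$ and $w''\bar{c}\,Z_{(i',j)}\,v''\bar{d}$, whence the induction hypothesis for $\chi$ at $(i',j)$ gives $\mathfrak{N},v''\models\chi(\bar{d})$ and hence $\mathfrak{N},v'\models\lozenge\chi(\bar{d})$; the converse is symmetric via $\lozenge$-back. The case $\psi=\exists x\,\chi(x)$ is analogous, now decreasing the second coordinate: from $\delta(\psi)=(l',r'+1)\le(i,j)$ we get $j\ge 1$, write $j=j'+1$, pick a witness $c'\in D_\mathfrak{M}(w')$ for the existential on the $\mathfrak{M}$-side, apply clause (iv) ($\exists$-forth) to obtain $d'\in D_\mathfrak{N}(v')$ with $w'\bar{c}c'\,Z_{(i,j')}\,v'\bar{d}d'$, and invoke the induction hypothesis for $\chi$ at $(i,j')$ on the extended tuples; the other direction uses $\exists$-back.

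I expect the only real care needed to be the bookkeeping: phrasing the inductive claim so that at the $\lozenge$- and $\exists$-steps the degree of the subformula sits below the available smaller index $(i',j)$ resp. $(i,j')$, and checking that the back-and-forth moves preserve equal length of the two tuples (so that the atomic clause and the requirement $|\bar{x}|=|\bar{a}|=|\bar{b}|$ remain meaningful). One also tacitly uses that the family $\langle Z_{(i,j)}\rangle$ is decreasing in the index, i.e. $Z_{(i,j)}\subseteq Z_{(i',j')}$ whenever $(i',j')\le(i,j)$, which is what makes the atomic and conjunction cases go through at an arbitrary level; if this monotonicity is not explicitly built into the definition it should be read into it, exactly as in the propositional-modal notion of $n$-bisimulation. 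No compactness, saturation, or ultraproduct machinery is required: the argument is just the two-dimensional version of the standard ``$n$-bisimilar implies depth-$n$ equivalent'' proof, and is the finite-approximation analogue of Proposition~\ref{bis}.
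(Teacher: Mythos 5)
Your proof is correct and is essentially the paper's argument: the paper disposes of this proposition with the single line ``by induction on $(l,r)$'', and your structural induction on the formula with the index pair $(i,j)$ tracked alongside it is exactly the standard way of carrying out that induction. Your closing remark is also apt: the paper's definition of $(l,r)$-bisimulation omits the nesting $Z_{(l,r)}\subseteq\cdots\subseteq Z_{(0,0)}$ that the propositional analogue (Proposition 2.31 / Definition 2.30 in Blackburn--de Rijke--Venema) builds in, and without reading that monotonicity (or atomic harmony at every level) into the definition the atomic case at indices above $(0,0)$ would not follow, so flagging and adopting it is the right move.
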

\begin{proof}
By induction on $(l,r)$.
\end{proof}

\begin{prop}\label{ckf}
Suppose $\tau$ is finite. Then $(\mathfrak{M},w\bar{a})\rightleftarrows_{(l,r)} (\mathfrak{N},v\bar{b})$ if and only if $(\mathfrak{M},w\bar{a})\equiv_{(l,r)} (\mathfrak{N},v\bar{b})$.
\end{prop}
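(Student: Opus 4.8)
The forward implication is exactly Proposition~\ref{fbisim}, so the content lies in the converse: from $(\mathfrak{M},w\bar a)\equiv_{(l,r)}(\mathfrak{N},v\bar b)$ one must manufacture a system $\langle Z_{(i,j)}:0\le i\le l,\ 0\le j\le r\rangle$ satisfying clauses (i)--(iv) of the definition of $(l,r)$-bisimulation. The plan is to use the ``canonical'' candidate, declaring $w'\bar c\,Z_{(i,j)}\,v'\bar d$ precisely when $|\bar c|=|\bar d|$ and $(\mathfrak{M},w'\bar c)\equiv_{(i,j)}(\mathfrak{N},v'\bar d)$, and then to drive the verification with Hintikka-style characteristic formulas, exactly as in the classical argument that $\equiv_k$ coincides with $k$-partial isomorphism over a finite relational vocabulary.

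The crucial ingredient --- and the step I expect to be the real obstacle --- is a finiteness lemma: since $\tau$ is finite and has no function symbols, for every $m$ and every pair $(i,j)$ there are, up to logical equivalence, only finitely many formulas of degree $\le(i,j)$ whose free variables lie among $x_1,\dots,x_m$. I would prove this by induction on $(i,j)$ under the product order, uniformly in $m$. In the base case $(0,0)$ the only $\tau$-terms in $x_1,\dots,x_m$ are those variables and the finitely many constant symbols, so there are finitely many atomic formulas, hence finitely many Boolean combinations up to equivalence. In the inductive step, $\neg$ and $\wedge$ keep a finite set finite up to equivalence, while a formula of the form $\lozenge\psi$ forces $\delta(\psi)\le(i-1,j)$ and a formula of the form $\exists x_k\psi$ forces $\delta(\psi)\le(i,j-1)$ (with $\psi$ having one extra free variable, which we may rename to $x_{m+1}$); in both cases the inductive hypothesis bounds the number of relevant $\psi$ up to equivalence. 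Along the construction the object-sequences only ever grow, and by at most $r$, so only the variables $x_1,\dots,x_{n+r}$ with $n=|\bar a|$ are needed, keeping everything within scope. Granting the lemma, for each pointed model $(\mathfrak{M},w'\bar c)$ with $|\bar c|=m$ let $\chi^{(i,j)}_{w'\bar c}(x_1,\dots,x_m)$ be the conjunction of a complete set of representatives of the degree-$\le(i,j)$ formulas true at $(\mathfrak{M},w'\bar c)$; this is a genuine formula of degree $\le(i,j)$, and $(\mathfrak{N},v'\bar d)\models\chi^{(i,j)}_{w'\bar c}(\bar d)$ holds iff $(\mathfrak{M},w'\bar c)\equiv_{(i,j)}(\mathfrak{N},v'\bar d)$.

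With the characteristic formulas in hand the verification is routine. Clause (i) is the hypothesis, and clause (ii) holds because atomic formulas have degree $(0,0)$. For $\lozenge$-forth in clause (iii): if $w'\bar c\,Z_{(i+1,j)}\,v'\bar d$ and $w'R_\mathfrak{M}w''$, then $\mathfrak{M},w'\models\lozenge\chi^{(i,j)}_{w''\bar c}(\bar c)$, a formula of degree $\le(i+1,j)$, so by $\equiv_{(i+1,j)}$ we get $\mathfrak{N},v'\models\lozenge\chi^{(i,j)}_{w''\bar c}(\bar d)$, which yields some $v''$ with $v'R_\mathfrak{N}v''$ and $\mathfrak{N},v''\models\chi^{(i,j)}_{w''\bar c}(\bar d)$, i.e.\ $w''\bar c\,Z_{(i,j)}\,v''\bar d$; the $\lozenge$-back direction is symmetric. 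Clause (iv) is handled the same way: from $w'\bar c\,Z_{(i,j+1)}\,v'\bar d$ and $c'\in D_\mathfrak{M}(w')$ one passes to the formula $\exists x_{m+1}\chi^{(i,j)}_{w'\bar c c'}$ of degree $\le(i,j+1)$ to extract a suitable $d'\in D_\mathfrak{N}(v')$, using that the semantics of $\exists$ requires a witness in the local domain. The only genuinely delicate point throughout is the finiteness-up-to-equivalence bookkeeping in the lemma --- in particular bounding the number of free variables --- since without finiteness of $\tau$ the characteristic formulas need not exist and the statement indeed fails.
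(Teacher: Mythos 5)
Your proposal is correct and follows essentially the same route as the paper: the paper also defines $Z_{(i,j)}$ as the $(i,j)$-equivalence relation on suitably extended tuples and invokes the finiteness of formulas of bounded degree up to logical equivalence (citing the analogue of Proposition 2.31 of Blackburn--de Rijke--Venema) to run the characteristic-formula back-and-forth argument you spell out. Your write-up merely fills in the details (the finiteness lemma and the Hintikka formulas) that the paper leaves to that reference.
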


\begin{proof}
The left to right direction follows from Proposition \ref{fbisim}.

To show the other direction, note that since the language $\tau$ is finite,  by induction on $(l,r)$, up to logical equivalence, there is a finite set of formulas with degree at most $(l,r)$.

So,  it suffices to prove that the sequence $\langle Z_{(i,j)}:  1\leq i\leq l, 1\leq j\leq r\rangle$, where 
$$
Z_{(i,j)}=\{ (w\bar{a}c_1\dots c_{r-j},v\bar{b}d_1\dots d_{r-j})\ |\ w\bar{a}c_1\dots c_{r-j}\equiv_{(i,j)} v\bar{b}d_1\dots d_{r-j}\}
$$
is a $(l,r)$ bisimulation. But this can be proved in a similar way as in Proposition 2.31 in \cite{blac:modal01}.
\end{proof}

Fixing a bijective coding function $\langle,\rangle: \mathbb{N}\times\mathbb{N}\rightarrow\mathbb{N}$ we say that $(\mathfrak{M},w)$ and $(\mathfrak{N},v)$ are $k$-bisimilar provided that if $\langle l,r\rangle=k$.
Since the coding function $\langle,\rangle$ is bijective, there are unique functions $\textit{left}$ and $\textit{right}$ from $\mathbb{N}$ to $\mathbb{N}$ such that $\textit{left}(k)=l$ and $\textit{right}(k)=r$ if and only if $k=\langle l,r\rangle$. 
Using this coding function we may adapt Definition 2.4 to say that  two models are  $k$-bisimilar for $k\in N$.  

First-order modal logic can be viewed as a fragment of two-sorted first-order logic. This is beneficial especially  in proving  number of properties for this logic, for example compactness and the L\"owenheim-Skolem  property.   To this end, for a given first-order modal language $\tau$ one can consider a two-sorted language $\tau^{cor}$ whose sorts are denoted by $s_W$ and $s_O$. The first sort $s_W$ corresponds to the set of possible worlds and the second sort $s_O$ distinguishes  the set of objects.  So $\tau^{cor}$ has two binary predicates  $R(u,u')$ and $E(u,x)$  where $u,u'\in s_W$ and $x\in s_O$. The intuitive meaning of  $R(u,u')$ is that $u$ is related to $u'$ via the accessibility relation $R$ and $E(u,x)$ is interpreted as the object $x$ being in the domain $D(u)$.  
The language $\tau^{cor}$ includes all constant symbols of $\tau$ and  for a given $n$-ary predicate $p(x_1,\dots ,x_n)\in\tau$, $\tau^{cor}$ includes $(n+1)$-ary predicate $P(u,x_1,\dots , x_n)$.  

So, any first-order modal $\tau$-formula can be inductively translated to a $\tau^{cor}$-formula as follows:
\begin{itemize}
\item $ST_u(t_1=t_2) = t_1=t_2$, 
\item $ST_u(P(x_1,\dots,x_n)) = P(u,x_1,\dots,x_n)$,
\item $ST_u(\neg\phi)=\neg ST_u(\phi)$,
\item $ST_u(\phi\wedge\psi) = ST_u(\phi) \wedge ST_u(\psi)$,
\item $ST_u(\lozenge\phi) = \exists u'(R(u,u')\wedge ST_{u'}(\phi))$,
\item $ST_u(\exists x\phi(x)) = \exists x (E(u,x)\wedge ST_u(\phi))$.
\end{itemize}

Furthermore, any first-order Kripke model $\mathfrak{M}$ can be viewed as a $\tau^{cor}$-structure $\mathfrak{M}^\ast$ in the natural way.

The following proposition is obtained by induction on the complexity of first-order modal formulas.

\begin{prop}
Let $\phi(x_1,\dots,x_n)$ be any first-order modal formula. Then for any  Kripke model $\mathfrak{M}$ with $w\in W$ and $a_1,\dots a_n \in D$
\[
\mathfrak{M},w\models\phi(a_1,\dots , a_n) \ \text{ if and only if  } \ \mathfrak{M}^\ast\models ST_u(\phi)(w,a_1,\dots ,a_n).
\]
\end{prop}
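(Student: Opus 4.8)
The plan is to prove the equivalence by a straightforward induction on the complexity of the first-order modal formula $\phi$, matching the clauses defining $ST_u$ against the clauses defining satisfaction in a Kripke model. Throughout, I read both sides with the natural assignment that sends the world-variable $u$ to $w$ and the object-variables $x_1,\dots,x_n$ to $a_1,\dots,a_n$. Since $\tau$ has no function symbols, every $\tau$-term is a variable or a constant, and by clause (ii) in the definition of a Kripke model the value $t^{\mathfrak{M},\sigma}$ is independent of the world; this is exactly what makes the two atomic clauses of $ST_u$ literal identities on terms, so the induction never has to touch terms separately.

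For the base cases, let $\phi$ be atomic. If $\phi$ is $t_1=t_2$ then $ST_u(\phi)$ is again $t_1=t_2$, and since $\mathfrak{M}^\ast$ inherits from $\mathfrak{M}$ the common interpretation of the constants, $\mathfrak{M},w\models t_1=t_2$ iff $t_1^{\mathfrak{M},\sigma}=t_2^{\mathfrak{M},\sigma}$ iff $\mathfrak{M}^\ast\models ST_u(\phi)(w,\bar a)$. If $\phi$ is $P(t_1,\dots,t_n)$ then $ST_u(\phi)$ is $P(u,t_1,\dots,t_n)$, and by the very definition of $\mathfrak{M}^\ast$ one has $(w,a_1,\dots,a_n)\in P^{\mathfrak{M}^\ast}$ precisely when $(a_1,\dots,a_n)\in I(w,P)$, so the two sides agree. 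I would remark here that this is the one spot where the paper's choice of semantics — $I(w,P)$ need not be contained in $D(w)^n$ — is matched by the fact that the $(n{+}1)$-ary relation $P^{\mathfrak{M}^\ast}$ carries no domain restriction.

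For the inductive step, the Boolean cases $\neg\phi$ and $\phi\wedge\psi$ drop out of the induction hypothesis together with $ST_u(\neg\phi)=\neg ST_u(\phi)$ and $ST_u(\phi\wedge\psi)=ST_u(\phi)\wedge ST_u(\psi)$. For $\lozenge\phi$, unwinding $ST_u(\lozenge\phi)=\exists u'\,(R(u,u')\wedge ST_{u'}(\phi))$: by the satisfaction clause $\mathfrak{M},w\models\lozenge\phi(\bar a)$ iff there is $w'$ with $wR w'$ and $\mathfrak{M},w'\models\phi(\bar a)$, which by the induction hypothesis (which holds verbatim for the renamed world-variable $u'$) is equivalent to the existence of $w'$ with $(w,w')\in R^{\mathfrak{M}^\ast}$ and $\mathfrak{M}^\ast\models ST_{u'}(\phi)(w',\bar a)$, i.e.\ to $\mathfrak{M}^\ast\models ST_u(\lozenge\phi)(w,\bar a)$. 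For $\exists x\,\phi(x)$, unwinding $ST_u(\exists x\phi)=\exists x\,(E(u,x)\wedge ST_u(\phi))$: $\mathfrak{M},w\models\exists x\phi(\bar a)$ iff there is $c\in D(w)$ with $\mathfrak{M},w\models\phi(\bar a,c)$, which by the induction hypothesis is equivalent to the existence of $c$ with $(w,c)\in E^{\mathfrak{M}^\ast}$ (that is, $c\in D(w)$) and $\mathfrak{M}^\ast\models ST_u(\phi)(w,\bar a,c)$, i.e.\ to $\mathfrak{M}^\ast\models ST_u(\exists x\phi)(w,\bar a)$. This exhausts the cases. I do not expect a genuine obstacle anywhere; the only clause that deserves a moment's attention is the existential one, where the domain function $D(\cdot)$ enters the semantics and is captured on the translation side exactly by the guard $E(u,x)$ — choosing that guard correctly is the single design point, and once it is in place the whole argument is mechanical.
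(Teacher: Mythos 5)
Your proof is correct and follows exactly the route the paper indicates: the paper only states that the proposition "is obtained by induction on the complexity of first-order modal formulas," and your argument is a careful spelling-out of precisely that induction, with the atomic, Boolean, $\lozenge$, and $\exists$ cases matched clause-by-clause against the definition of $ST_u$ and of $\mathfrak{M}^\ast$. No gaps; the attention you give to the guard $E(u,x)$ and to the unrestricted interpretation of predicates is exactly where the details matter.
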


Similar to propositional modal logic the van Benthem invariance theorem also holds for FML.

\begin{theo}[Invariance Theorem \cite{benthem:open10}]\label{invar}
Let $\alpha$ be a first-order $\tau^{cor}$-formula. Then $\alpha$  is a translation of some modal $\tau$-formula if and only if it  is invariant under bisimulations.
\end{theo}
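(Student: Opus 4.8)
The plan is to transplant van Benthem's characterization argument into the two-sorted first-order logic of $\tau^{cor}$. Throughout, $\alpha$ is a $\tau^{cor}$-formula with a single free world variable $u$ and finitely many free object variables $\bar x$; ``$\alpha$ is invariant under bisimulations'' means that $(\mathfrak M,w\bar a)\rightleftarrows(\mathfrak N,v\bar b)$ implies $\mathfrak M^\ast\models\alpha(w,\bar a)$ iff $\mathfrak N^\ast\models\alpha(v,\bar b)$, and ``$\alpha$ is a translation of a modal formula'' is read up to logical equivalence over Kripke models. The direction ``translation $\Rightarrow$ invariant'' is immediate from Proposition~\ref{bis} (bisimilar pointed models satisfy the same modal formulas) together with the standard-translation equivalence $\mathfrak M^\ast\models ST_u(\phi)(w,\bar a)\iff\mathfrak M,w\models\phi(\bar a)$.

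For the converse, suppose $\alpha$ is bisimulation invariant. Let $KA$ be the (universal--existential, hence first-order) $\tau^{cor}$-theory $\{\forall u\,\exists x\,E(u,x),\ \forall x\,\exists u\,E(u,x)\}$, so that, up to the evident bookkeeping, a model of $KA$ is exactly an $\ast$-image of a varying-domain Kripke model; note that $KA$ is preserved under elementary extensions. Let $MOC(\alpha)$ be the set of all $ST_u(\phi)$, for $\phi$ a modal $\tau$-formula, such that $KA\models\alpha\rightarrow ST_u(\phi)$ (the ``modal consequences'' of $\alpha$). The key claim is $KA\cup MOC(\alpha)\models\alpha$. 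Granting it, compactness yields $\phi_1,\dots,\phi_k$ with each $ST_u(\phi_i)\in MOC(\alpha)$ and $KA\models(\bigwedge_i ST_u(\phi_i))\rightarrow\alpha$; since $\bigwedge_i ST_u(\phi_i)=ST_u(\bigwedge_i\phi_i)$ and, by definition of $MOC(\alpha)$, also $KA\models\alpha\rightarrow ST_u(\bigwedge_i\phi_i)$, we conclude that $\alpha$ is equivalent over Kripke models to the translation $ST_u(\bigwedge_i\phi_i)$, as required.

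To prove the claim, fix a Kripke model $\mathfrak M$ with $\mathfrak M^\ast\models MOC(\alpha)(w,\bar a)$ and let $T(u,\bar x)=\{ST_u(\phi) : \mathfrak M,w\models\phi(\bar a)\}$ be the modal type of $w\bar a$. First, $KA\cup\{\alpha\}\cup T$ is consistent: otherwise compactness gives $\phi_1,\dots,\phi_k$ with $\mathfrak M,w\models\phi_i(\bar a)$ for all $i$ and $KA\models\alpha\rightarrow\neg ST_u(\bigwedge_i\phi_i)$, so $ST_u(\neg\bigwedge_i\phi_i)\in MOC(\alpha)$, contradicting $\mathfrak M^\ast\models MOC(\alpha)(w,\bar a)$. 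A realization is an $\ast$-image of a Kripke model $\mathfrak N$ with a point $v\bar b$ such that $\mathfrak N^\ast\models\alpha(v,\bar b)$ and $(\mathfrak M,w\bar a)\equiv(\mathfrak N,v\bar b)$. Passing to $\omega$-saturated elementary extensions (which remain models of $KA$ and preserve all modal formulas via the standard translation) yields $\omega$-saturated Kripke models $\mathfrak M\preceq\mathfrak M_\omega$ and $\mathfrak N\preceq\mathfrak N_\omega$ with $(\mathfrak M_\omega,w\bar a)\equiv(\mathfrak N_\omega,v\bar b)$. The heart of the argument is a Hennessy--Milner theorem for FML: on $\omega$-saturated Kripke models the relation $Z=\{(w'\bar c,v'\bar d) : |\bar c|=|\bar d|,\ (\mathfrak M_\omega,w'\bar c)\equiv(\mathfrak N_\omega,v'\bar d)\}$ is a bisimulation. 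The atomic clause is clear; for $\lozenge$-forth, given $w'R w''$, the $\tau^{cor}$-type over $\{v'\}\cup\bar d$ in a fresh world variable $u''$ consisting of $R(v',u'')$ and all $ST_{u''}(\phi)(u'',\bar d)$ with $\mathfrak M_\omega,w''\models\phi(\bar c)$ is finitely satisfiable (any finite part follows from $\mathfrak M_\omega,w'\models\lozenge(\bigwedge_i\phi_i)(\bar c)$, which transfers across $\equiv$), hence realized by $\omega$-saturation of $\mathfrak N_\omega$ by some $v''$ with $v'R v''$ and $(\mathfrak M_\omega,w''\bar c)\equiv(\mathfrak N_\omega,v''\bar d)$; the $\exists$-forth clause is analogous with $E(v',x')$ in place of $R(v',u'')$, the atom $E(v',x')$ forcing the realizing object into $D_{\mathfrak N_\omega}(v')$, and the back clauses are symmetric. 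Therefore $(\mathfrak M_\omega,w\bar a)\rightleftarrows(\mathfrak N_\omega,v\bar b)$, and invariance of $\alpha$ together with $\mathfrak N_\omega^\ast\models\alpha(v,\bar b)$ forces $\mathfrak M_\omega^\ast\models\alpha(w,\bar a)$, hence $\mathfrak M^\ast\models\alpha(w,\bar a)$ by elementarity, which proves the claim.

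The step I expect to be the main obstacle is the Hennessy--Milner lemma: one must run the back-and-forth simultaneously for the $\lozenge$-moves and the $\exists$-moves \emph{on sequences of objects}, each time recognizing the relevant family of $\tau^{cor}$-formulas as a genuine finitely satisfiable type over a finite parameter set, and be careful that $\omega$-saturation of the two-sorted structure supplies realizations of the correct sort and, in the $\exists$-step, inside the correct world-domain $D(v')$. A secondary point --- needed so that ``bisimulation invariance'' may legitimately be invoked --- is checking that every $\tau^{cor}$-model of $KA$ is an $\ast$-image of a varying-domain Kripke model and that this class is closed under $\omega$-saturated elementary extensions.
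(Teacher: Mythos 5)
Your argument is correct and is essentially the standard proof of this theorem: the paper itself states it with a citation to van Benthem rather than proving it, and the expected proof is exactly your detour through modal consequences, compactness over the two-sorted correspondence theory, and $\omega$-saturated elementary extensions combined with the Hennessy--Milner theorem for m-saturated models that the paper establishes in Section \ref{sec bisim}. Your care about the $\exists$-step (using $E(v',x')$ to force the realizing object into $D(v')$) and about models of $KA$ being exactly $\ast$-images of varying-domain Kripke models addresses the only points where the adaptation to FML differs from the propositional case.
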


The notion of a modally-saturated model, introduced below,  is a modification of $\omega$-saturation in first-order model theory. To this end, we first need to define the notion of a type over a Kripke model.
For a given varying domain Kripke model $\mathfrak{M}=(W, R, D, I)$ 
 and a finite subset $A\subseteq D$, the language $\tau_A$ is an expansion of $\tau$ by adding some new constant symbols $c_a$ for all $a\in A$. The  $\tau_A$-Kripke model $\mathfrak{M}_A$ expands $\mathfrak{M}$ naturally by interpreting any constant symbol $c_a$ by $a$ itself.

\begin{defi}\label{type}
Let $\Gamma(\bar{x})$ be a set of $\tau_A$-formulas whose free variables are among $\bar{x}$. A set of formulas $\Gamma(\bar{x})$ is an \textit{$\exists$-type} of $(\mathfrak{M}_A,w)$ if for all finite subsets $\Gamma_0(\bar{x})$ of $\Gamma(\bar{x})$, we have $\mathfrak{M}_A,w\models\exists \bar{x}\bigwedge \Gamma_0(\bar{x})$. 
Similarly, $\Gamma(\bar{x})$ is a \textit{$\lozenge$-type} of $(\mathfrak{M}_A,w)$ with respect to some $\bar{a}\in D$, if $\mathfrak{M}_A,w\models\lozenge\bigwedge\Gamma_0(\bar{a})$ for all finite $\Gamma_0\subseteq\Gamma$.
\end{defi}

A type of $(\mathfrak{M},w)$ is either a $\lozenge$-type or an $\exists$-type of $(\mathfrak{M}_A,w)$ for some finite subset $A\subseteq D$.

\begin{defi}\label{saturat}
An $\exists$-type $\Gamma(\bar{x})$ is realized in $(\mathfrak{M},w)$ if there are $\bar{a}\in D(w)$ such that $\mathfrak{M},w\models\Gamma(\bar{a})$.
Likewise, a $\lozenge$-type $\Gamma(\bar{x})$ is realized in $(\mathfrak{M},w)$ with respect to $\bar{a}$, if there is an element $w'\in W$ such that $wRw'$ and $\mathfrak{M},w'\models\Gamma(\bar{a})$. 

A model $\mathfrak{M}$ is \textit{modally-saturated} (or m-saturated for short) if for every $w\in W$ and each finite subset $A$ of $D$, every type of  $(\mathfrak{M},w)$ is realized in $(\mathfrak{M},w)$.
\end{defi}

Next we present the first-order version of the Hennessy-Milner theorem.

\begin{theo}[Hennessy-Milner Theorem]
Let $\mathfrak{M}$ and $\mathfrak{N}$ be two modally-saturated Kripke models. Then $(\mathfrak{M},w_0\bar{a}_0)\equiv(\mathfrak{N},v_0\bar{b}_0)$ if and only if $(\mathfrak{M},w_0\bar{a}_0)\rightleftarrows(\mathfrak{N},v_0\bar{b}_0)$.
\end{theo}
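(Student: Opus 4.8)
The plan is to establish the nontrivial ``only if'' direction by exhibiting the modal‑equivalence relation itself as a bisimulation; the ``if'' direction is already Proposition \ref{bis}. Concretely, I would set
\[
Z = \{\, (w\bar a, v\bar b) : |\bar a| = |\bar b| \text{ and } (\mathfrak{M}, w\bar a) \equiv (\mathfrak{N}, v\bar b) \,\},
\]
where, following the convention fixed above, $\equiv$ means agreement on all modal $\tau$-formulas with at most $|\bar a|$ free variables. Since $(w_0\bar a_0, v_0\bar b_0) \in Z$ by hypothesis, it then suffices to check that $Z$ meets the three requirements of Definition \ref{bisim}; condition (1) is immediate, as atomic formulas are among the formulas witnessing $\equiv$.

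For the modal clause I would fix $(w\bar a, v\bar b) \in Z$ and $w'$ with $w R_\mathfrak{M} w'$, and let $\Gamma(\bar x)$ collect every $\tau$-formula $\phi(\bar x)$ with $\mathfrak{M}, w' \models \phi(\bar a)$. For each finite $\Gamma_0 \subseteq \Gamma$ we get $\mathfrak{M}, w \models \lozenge\bigwedge\Gamma_0(\bar a)$ (witnessed by $w'$), and since $\lozenge\bigwedge\Gamma_0$ has at most $|\bar b|$ free variables, membership in $Z$ transfers this to $\mathfrak{N}, v \models \lozenge\bigwedge\Gamma_0(\bar b)$; hence $\Gamma$ is a $\lozenge$-type of $(\mathfrak{N}, v)$ with respect to $\bar b$. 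By m-saturation of $\mathfrak{N}$ there is $v'$ with $v R_\mathfrak{N} v'$ and $\mathfrak{N}, v' \models \Gamma(\bar b)$. Because $\Gamma$ is complete — for every $\phi(\bar x)$ it contains $\phi$ or $\neg\phi$, as $(\mathfrak{M}, w')$ decides every formula — this forces $(\mathfrak{M}, w'\bar a) \equiv (\mathfrak{N}, v'\bar b)$, i.e.\ $(w'\bar a, v'\bar b) \in Z$. The $\lozenge$-back clause is the mirror image, invoking m-saturation of $\mathfrak{M}$.

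For the quantifier clause, given $(w\bar a, v\bar b) \in Z$ and $c \in D_\mathfrak{M}(w)$, I would take $A$ to be the finite set of entries of $\bar b$ and, working in $\tau_A$, let $\Gamma(y)$ consist of all formulas $\phi(c_{b_1},\dots,c_{b_n}, y)$ with $\phi(\bar x, y)$ a $\tau$-formula and $\mathfrak{M}, w \models \phi(\bar a, c)$. Since $c \in D_\mathfrak{M}(w)$, each finite conjunction from $\Gamma$ yields $\mathfrak{M}, w \models \exists y \bigwedge\Gamma_0(\bar a, y)$, and $Z$ sends this to $\mathfrak{N}_A, v \models \exists y \bigwedge\Gamma_0(y)$, so $\Gamma$ is an $\exists$-type of $(\mathfrak{N}_A, v)$, realized by m-saturation at some $d \in D_\mathfrak{N}(v)$. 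Completeness of $\Gamma$ again upgrades one-sided transfer to $(\mathfrak{M}, w\bar a c) \equiv (\mathfrak{N}, v\bar b d)$, so $(w\bar a c, v\bar b d) \in Z$; $\exists$-back is symmetric. Finally, $(w_0\bar a_0, v_0\bar b_0) \in Z$ gives the desired bisimilarity.

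I expect no genuine obstacle, only careful bookkeeping against the definitions: one must confirm that each $\Gamma$ built above is literally a type in the sense of Definition \ref{type} — the finiteness of $\Gamma_0$ being exactly what makes the $\lozenge$- and $\exists$-prefixed statements available in $\mathfrak{M}$ prior to transfer — that the constants $c_{b_i}$ range over a finite set so the correct instance of m-saturation applies, and that it is the \emph{completeness} of $\Gamma$, not merely its being a type, that turns the implication ``$\phi \in \Gamma \Rightarrow \mathfrak{N} \models \phi$'' into the two-sided modal equivalence demanded by membership in $Z$.
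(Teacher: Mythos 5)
Your proposal is correct and follows essentially the same route as the paper: the paper's proof simply asserts that $Z=\{(w\bar{a},v\bar{b})\ |\ (\mathfrak{M},w\bar{a})\equiv(\mathfrak{N},v\bar{b})\}$ is a bisimulation containing $(w_0\bar{a}_0,v_0\bar{b}_0)$, and your type-construction plus m-saturation argument (with the completeness of each $\Gamma$ upgrading one-sided transfer to full equivalence) is exactly the verification the paper leaves to the reader.
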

\begin{proof}
The right to left direction is derived from Proposition \ref{bis}. For the other direction, one can show that the set
 $$Z=\{(w\bar{a},v\bar{b})\ |\ (\mathfrak{M},w\bar{a})\equiv(\mathfrak{N},v\bar{b})\}$$
  is a bisimulation between $\mathfrak{M}$ and $\mathfrak{N}$ with $w_0\bar{a}_0 Z v_0\bar{b}_0$. 
\end{proof}

Let $\mathfrak{M}$ be a Kripke model. We call $\mathfrak{M}$ an $\omega$-saturated model if $\mathfrak{M}^\ast$ is $\omega$-saturated as a  first-order $\tau^{cor}$-structure.  Obviously any $\omega$-saturated model is also m-saturated.

\subsection{Frame Definability}\label{fr def}

Our objective in this section is to prove some form of the Goldblatt-Thomason theorem for first-order modal logic.   The Goldblatt-Thomason theorem states that an elementary class of frames is definable by a set of propositional modal formulas if and only if it is closed under bounded morphic images, generated subframes and disjoint unions, and reflects ultrafilter extensions \cite{blac:modal01}.  Here, we consider the question of which elementary classes of frames are definable by a set of first-order modal sentences. To motivate this question we first give Example \ref{example} which shows that there exist a class of Kripke frames which is not definable by a set of propositional modal formulas but  is definable by a set of first-order modal sentences. While the validity of first-order modal formulas are preserved under  bounded morphic images, generated subframes, and disjoint unions, we see that the validity of first-order modal formulas in frames is no longer reflected by ultrafilter extensions. Furthermore, our main result (Theorem \ref{gt}) shows that if the class of Kripke frames is closed under these three operations, then it can be defined by a set of first-order modal sentences.

Throughout this subsection we focus on constant domain Kripke models. Henceforth, we suppose $\tau$ is a first-order modal language and $\mathfrak{F}$ is a frame. We say that a $\tau$-formula $\phi(x_1,\dots,x_n)$  is valid at a world $w$ of the frame $\mathfrak{F}$ if for any constant domain $\tau$-model $\mathfrak{M}$ based on $\mathfrak{F}$, we have $\mathfrak{M},w\models\forall x_1\dots\forall x_n\phi(x_1,\dots,x_n)$. In this case we write $\mathfrak{F},w\models\phi(x_1,\dots,x_n)$. 
We also say $\phi(x_1,\dots,x_n)$ is valid on the frame $\mathfrak{F}$, denoted by $\mathfrak{F}\models\phi(x_1,\dots,x_n)$, whenever it is valid in any world $w\in W$.

A class of Kripke frames $K$ is \textit{FML-definable} if for some language $\tau$ there exists a set of first-order modal $\tau$-sentences $\Lambda$ such that for any frame $\mathfrak{F}$, we have $\mathfrak{F}\in K$ if and only if all sentences of $\Lambda$ are valid on $\mathfrak{F}$. More generally, a  set of first-order modal formulas $\Lambda$ defines $K$ if, for any Kripke frame  $\mathfrak{F}$ we have $\mathfrak{F}\in K$ if and only if $\mathfrak{F}\models\forall \bar{x}\phi(\bar{x})$ for any formula $\phi(\bar{x})\in\Lambda$.

By the substitution property  one can see that if a class of frames $K$ is definable by a set of propositional modal formulas (PML-definable for short) then it is also FML-definable.

Suppose $\theta(p_1,\dots,p_n)$ is a propositional modal formula and $P_1(x), \dots, P_n(x)$  are atomic formulas where $P_i$, $1\leq i\leq n$, is a unary predicate and $x$ is a single variable.  The substitution of $\theta(p_1,\dots,p_n)$  is  a first-order modal formula $\theta(P_1(x),\dots,P_n(x))$ which is obtained by uniformly replacing each proposition $p_i$ by an atomic formula  $P_i(x)$ for some variable $x$. 

\begin{prop}\label{propdefin}
Let $K$ be any class of frames. If $K$ is PML-definable then it is also FML-definable.
\end{prop}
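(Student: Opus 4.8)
The plan is to show that if a propositional modal formula $\theta(p_1,\dots,p_n)$ is valid on a frame $\mathfrak{F}$, then its substitution instance $\theta(P_1(x),\dots,P_n(x))$ (with a single fresh variable $x$) is also valid on $\mathfrak{F}$, and conversely; then, given a set $\Lambda_0$ of propositional modal formulas defining $K$, the set $\Lambda = \{\,\theta(P_1(x),\dots,P_n(x)) : \theta(p_1,\dots,p_n)\in\Lambda_0\,\}$ of first-order modal formulas defines the same class $K$. Here $P_1,\dots,P_n$ are unary predicate symbols chosen in a first-order modal language $\tau$, one family of such symbols sufficing for each $\theta$ (or a fixed countable supply reused as needed). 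The key observation is that validity on a frame quantifies over all valuations/interpretations, so the substitution trick converts a propositional valuation into a first-order interpretation of the $P_i$ and back.

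First I would fix a frame $\mathfrak{F}=(W,R)$ and a world $w\in W$. For the direction that matters (PML-definable $\Rightarrow$ FML-definable), suppose $\mathfrak{F}\models\theta(p_1,\dots,p_n)$ in the propositional sense, and let $\mathfrak{M}=(W,R,D,I)$ be any constant domain $\tau$-model based on $\mathfrak{F}$ and $a\in D$ any object. I would define a propositional valuation $V$ on $\mathfrak{F}$ by $V(p_i)=\{\,u\in W : a\in I(u,P_i)\,\}$, and then prove by induction on the structure of $\theta$ that for every subformula $\theta'$ and every $u\in W$, $(\mathfrak{F},V),u\Vdash\theta'$ iff $\mathfrak{M},u\models_\sigma\theta'(P_1(x),\dots,P_n(x))$ where $\sigma(x)=a$. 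The atomic case is exactly the definition of $V$; the Boolean cases and the $\lozenge$-case are immediate from the matching clauses in the two satisfaction definitions (note that since $x$ does not get quantified in a propositional-modal substitution instance, the assignment $\sigma$ on $x$ is never disturbed, and the constant domain assumption guarantees $a$ is available at every world, so no subtlety from varying domains arises). From $(\mathfrak{F},V),w\Vdash\theta$ we get $\mathfrak{M},w\models_\sigma\theta(P_1(x),\dots,P_n(x))$; since $\mathfrak{M}$, $w$, and $a$ were arbitrary, $\mathfrak{F},w\models\theta(P_1(x),\dots,P_n(x))$, and as $w$ was arbitrary, $\mathfrak{F}\models\theta(P_1(x),\dots,P_n(x))$. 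For the converse within a single formula, given a propositional valuation $V$ witnessing $(\mathfrak{F},V),w\not\Vdash\theta$, I would build a constant domain model with a one-element domain $D=\{a\}$ and $I(u,P_i)=\{a\}$ iff $u\in V(p_i)$, and run the same correspondence backwards to falsify the substitution instance; hence $\mathfrak{F}\models\theta(P_1(x),\dots,P_n(x))$ implies $\mathfrak{F}\models\theta$.

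Combining the two directions, for each $\theta\in\Lambda_0$ and each frame $\mathfrak{F}$ we have $\mathfrak{F}\models\theta$ iff $\mathfrak{F}\models\forall x\,\theta(P_1(x),\dots,P_n(x))$, so $\mathfrak{F}\in K$ iff $\mathfrak{F}$ validates every member of $\Lambda$, which is the definition of $K$ being FML-definable. I would need a mild bookkeeping remark that distinct $\theta$'s may be assigned disjoint blocks of predicate symbols (or one may reuse a single block, since a frame validates $\theta$ iff it validates all substitution instances in any choice of unary predicates — this too follows from the one-element-domain argument), so that no clash occurs and the resulting $\Lambda$ lives in one fixed language $\tau$.

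I do not expect a genuine obstacle here: the statement is essentially a syntactic translation lemma. The only point requiring a little care is the inductive correspondence between propositional and first-order satisfaction — specifically making explicit that the single free variable $x$ is inert throughout and that the constant domain assumption (in force for this subsection, as the paper states) makes the chosen object $a$ globally available, so the equivalence of the $\lozenge$-clauses is clean. Everything else is routine unwinding of definitions.
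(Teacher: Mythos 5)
Your proposal is correct and follows essentially the same route as the paper: substitute each proposition $p_i$ by a unary atom $P_i(x)$ in a single variable and take the universal closures $\forall x\,\theta(P_1(x),\dots,P_n(x))$ as the defining set. The paper states this translation and leaves the verification as "easily seen"; your valuation-interpretation correspondence (in both directions, using the one-element-domain model for the converse) is exactly the routine argument being elided, and it is sound under the constant-domain convention in force in that subsection.
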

\begin{proof}
Suppose $K$ is definable by a set of propositional modal formulas $\Lambda_P$. Let $\tau=\{P_1, P_2,\dots,P_n, \dots\}$ consist of countably many unary predicates and put  $\Lambda_{FO}$ to be a set of all formulas of $\forall x\  \theta(P_1(x),\dots, P_n(x))$ for $\theta\in\Lambda_P$. Then one can easily see that $\Lambda_{FO}$ defines $K$.
\end{proof}

The converse of the above proposition is not true in general. This means that there are some classes of frames which are first-order modally definable but not PML-definable.

\begin{exm}\label{example}\emph{
 Consider the class of frames which satisfy the condition that every world has a reflexive accessible  world (i.e. $\forall x\exists y(Rxy\wedge Ryy)$).  As is mentioned in \cite{blac:modal01} page 142,  this class is not definable by any set of propositional modal formulas, since it does not reflect the ultrafilter extension. However this class is definable by the formula $\lozenge \forall x (\square P(x) \rightarrow P(x))$. }

\emph{Clearly, $\lozenge \forall x (\square P(x) \rightarrow P(x))$ is valid on any such frame. For the converse  suppose that $\mathfrak{F}$ is a frame with some world $w_0$ that does not have any reflexive successor. Let  $\mathfrak{M}=(\mathfrak{F},D,I)$ be a constant domain model based on $\mathfrak{F}$ such that $|D|\geq |\{w'\ |\ wRw'\}|$. Moreover, for each $w'$, accessible by $w_0$ there exists a distinct element $d_{w'}\in D$ such that  $P(d_{w'})$ is false in $w'$ but it is true in all successors of $w'$.  Then for any $w'$ with $w_0Rw'$, we have $\mathfrak{M},w'\not\models \forall x (\square P(x) \rightarrow P(x))$. So $\mathfrak{M},w_0\not\models \lozenge \forall x (\square P(x) \rightarrow P(x))$. 
}\end{exm}

The above example shows in particular that the validity of first-order modal sentences is not reflected by ultrafilter extension.  So by this example it is natural to ask if any version of the Goldblatt-Thomason theorem holds for FML. 
 
 Next we prove that the validity of first-order modal formulas is preserved under bounded morphic images, generated subframes and disjoint unions. 
Let us begin by a quick review of the corresponding concepts.

Let $\mathfrak{F}$ and $\mathfrak{G}$ be two Kripke frames. A function $f:W_\mathfrak{F}\rightarrow W_\mathfrak{G}$ is a bounded morphism from $\mathfrak{F}$ to $\mathfrak{G}$ if
\begin{itemize}
\item  $wR_\mathfrak{F} w'$ implies $f(w) R_\mathfrak{G} f(w')$, and
\item if $f(w)R_\mathfrak{G}v$, then there exists $w'\in W_\mathfrak{F}$ such that $f(w')=v$ and  $wR_\mathfrak{F}w'$.
\end{itemize}
If, in addition,   $f$ is surjective, then $\mathfrak{G}$ is called a \textit{bounded morphic image} of $\mathfrak{F}$ and is denoted by $\mathfrak{F} \twoheadrightarrow \mathfrak{G}$.

A frame $\mathfrak{F}$ is a \textit{generated subframe} of $\mathfrak{G}$, $\mathfrak{F}\rightarrowtail\mathfrak{G}$, if $W_\mathfrak{F}\subseteq W_\mathfrak{G}$   and  $R_\mathfrak{F}=R_\mathfrak{G}\upharpoonright W_\mathfrak{F}$ and in addition, for any $w\in W_\mathfrak{F}$ if $wR_\mathfrak{G}w'$ then $w'\in W_\mathfrak{F}$. 

Suppose $\langle \mathfrak{F}_i : i\in I\rangle$ is a family of disjoint Kripke frames. The \textit{disjoint union} of $\mathfrak{F}_i$s is a frame $\mathfrak{F}$ in which $W= \biguplus_{i\in I} W_i$, $R= \biguplus_{i\in I} R_i$. In this case we write $\mathfrak{F}=\biguplus_{i\in I} \mathfrak{F}_i$.

For a Kripke frame $\mathfrak{F}=(W,R)$ let $Uf(W)$  be the set of all ultrafilters over $W$. Define the binary relation $R^{ue}$ on $Uf(W)$ as follows. For any $u,u'\in Uf(W)$, $uR^{ue}u'$ if and only if $m_\lozenge(X)=\{w\ | \ \exists w'\in W\   s.t\  wRw'\}\in u$ for any $X\in u'$. The Kripke frame $ue(\mathfrak{F})=(Uf(W),R^{ue})$ is called an ultrafilter extension of $\mathfrak{F}$. Finally,  for any $w\in W$ set $\hat{w}$ as the principal ultrafilter generated by $w$.

\begin{prop}\label{frmpreserv}
The validity of first-order modal sentences are preserved under bounded morphic images, generated subframes, and disjoint unions.
\end{prop}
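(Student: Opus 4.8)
The plan is to prove the three closure properties separately, in each case by taking a first-order modal sentence $\phi$ valid on the source frame(s) and showing it is valid on the target frame. For each of the three constructions on frames I would pick, for an arbitrary constant domain model $\mathfrak{N}$ based on the target frame, a suitably matched constant domain model $\mathfrak{M}$ based on the source frame, and then exhibit a bisimulation (in the sense of Definition \ref{bisim}) linking the relevant points of $\mathfrak{M}$ and $\mathfrak{N}$; Proposition \ref{bis} then transfers satisfaction, and since $\phi$ is a sentence with no free variables the transfer at the level of empty object-sequences suffices. The key point throughout is that we are free to copy the domain: since we work with constant domains, the target model $\mathfrak{N}=(\mathfrak{G},D,I)$ and we can use the \emph{same} set $D$ (and a matching interpretation) on the source side, so the $\exists$-forth and $\exists$-back clauses of the bisimulation become trivial (take $d=c$).

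For \emph{generated subframes} $\mathfrak{F}\rightarrowtail\mathfrak{G}$: given a model $\mathfrak{N}$ on $\mathfrak{F}$, we need to see $\phi$ valid at every $w\in W_\mathfrak{F}$. Extend $\mathfrak{N}$ arbitrarily to a model $\mathfrak{M}$ on $\mathfrak{G}$ with the same domain $D$, agreeing with $\mathfrak{N}$ on $W_\mathfrak{F}$. Then $Z=\{(w,\bar a),(w,\bar a)) : w\in W_\mathfrak{F},\ \bar a\in D^\star\}$ is a bisimulation, the crucial observation being that the generated-subframe condition guarantees that every $R_\mathfrak{G}$-successor of a world in $W_\mathfrak{F}$ is again in $W_\mathfrak{F}$, so the $\lozenge$-back clause stays inside $W_\mathfrak{F}$. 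Since $\phi$ is valid on $\mathfrak{G}$, hence at $w$ in $\mathfrak{M}$, it holds at $w$ in $\mathfrak{N}$. For \emph{disjoint unions} $\mathfrak{F}=\biguplus_i\mathfrak{F}_i$: a model $\mathfrak{N}$ on $\mathfrak{F}$ restricts to models $\mathfrak{N}_i$ on each $\mathfrak{F}_i$ (same domain, restricted interpretation), and the identity relation on $W_i\times D^\star$ is a bisimulation between $\mathfrak{N}_i$ and $\mathfrak{N}\!\upharpoonright\!W_i$, again using that no accessibility edges cross components. Validity of $\phi$ on each $\mathfrak{F}_i$ then gives validity at each world of $\mathfrak{N}$.

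For \emph{bounded morphic images} $f:\mathfrak{F}\twoheadrightarrow\mathfrak{G}$: given a model $\mathfrak{N}=(\mathfrak{G},D,I)$ on $\mathfrak{G}$, pull it back along $f$ to a model $\mathfrak{M}=(\mathfrak{F},D,I')$ with the same domain $D$ and $I'(w,P)=I(f(w),P)$, $I'(c)=I(c)$. Then $Z=\{((w,\bar a),(f(w),\bar a)) : w\in W_\mathfrak{F},\ \bar a\in D^\star\}$ is a bisimulation: the atomic clause holds by construction; $\lozenge$-forth uses that $f$ is a homomorphism ($wR_\mathfrak{F}w'\Rightarrow f(w)R_\mathfrak{G}f(w')$); $\lozenge$-back uses the back condition of a bounded morphism; and the $\exists$ clauses are trivial because the domains coincide. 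Hence for any sentence $\phi$ valid on $\mathfrak{F}$ and any $v\in W_\mathfrak{G}$, picking $w$ with $f(w)=v$ (surjectivity), $\mathfrak{M},w\models\phi$ gives $\mathfrak{N},v\models\phi$, so $\phi$ is valid on $\mathfrak{G}$.

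The routine parts are the verifications that the exhibited relations satisfy the three bisimulation clauses; these are short and mechanical once the models are set up with a shared domain. The one place that genuinely requires care — and the main obstacle — is bookkeeping around the definition of frame-validity: "valid at $w$" means $\mathfrak{M},w\models\forall\bar x\,\phi(\bar x)$ for \emph{all} models on the frame, so to refute validity of $\phi$ on $\mathfrak{G}$ one must produce a single bad model on $\mathfrak{G}$ from a bad model on $\mathfrak{F}$ (for the generated-subframe and disjoint-union directions this is the extension/restriction step, for bounded morphic images it is the pullback), and one must check that this correspondence of models is compatible with the constant-domain requirement (same interpretation of constants in every world), which is immediate here since we literally reuse $D$ and the constant interpretations. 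No appeal to ultrafilter extensions is needed, and indeed Example \ref{example} shows none could be made.
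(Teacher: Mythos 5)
Your proof is correct and follows essentially the route the paper intends: the paper's own proof is just a one-line appeal to the propositional argument (Theorem 3.14 of Blackburn--de Rijke--Venema), and your shared-domain constructions with explicit bisimulations (pullback along the bounded morphism, extension from a generated subframe, restriction to a component) are exactly the natural first-order lift of that argument, with the constant-domain assumption correctly used to trivialize the $\exists$-forth/back clauses.
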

\begin{proof}
The proof is a simple generalization of the proof of Theorem 3.14 in \cite{blac:modal01} for propositional version.
\end{proof}

Similar to propositional case one may prove that any frame is a bounded morphic image of the disjoint union of its point-generated subframes.

A first-order modal formula $\phi(x_1,\dots,x_n)$ is called \textit{universal} if it is in the form $\forall y_1,\dots,\forall y_m \psi(x_1,\dots,x_n,y_1,\dots,y_m)$ where $\psi$ is a quantifier-free formula.

\begin{prop}\label{unifram}
Let $K$ be a class of frames definable by a set of universal modal sentences in some language $\tau$. Then $K$ is definable by a set of propositional modal formulas.
\end{prop}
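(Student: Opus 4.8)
The plan is to show that a class $K$ of frames defined by a set of universal first-order modal sentences is closed under the three operations of Proposition \ref{frmpreserv} — bounded morphic images, generated subframes, and disjoint unions — and then invoke the forthcoming Goldblatt–Thomason-style characterization (Theorem \ref{gt}) to conclude $K$ is FML-definable; but since we want \emph{propositional} modal definability, the sharper route is to argue directly that validity of a universal modal sentence on a frame depends only on the "propositional shape" of the formula. Concretely, I would first observe that a universal modal $\tau$-sentence is of the form $\forall y_1\cdots\forall y_m\,\psi$ with $\psi$ quantifier-free; since the quantifiers are all at the front and the matrix is quantifier-free, when we pass to frame validity the universal quantifiers over objects become vacuous in the sense that we may as well substitute a single fixed variable for all of $y_1,\dots,y_m$. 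The key technical step is a \emph{substitution/collapse lemma}: if $\mathfrak{F}$ is any frame, then $\mathfrak{F}\models\forall y_1\cdots\forall y_m\,\psi(y_1,\dots,y_m)$ if and only if $\mathfrak{F}\models\forall y\,\psi(y,\dots,y)$, i.e. it suffices to test models whose domain is a singleton. The reason is that each atomic subformula of $\psi$ is of the form $P(t)$ or $t_1=t_2$ evaluated at worlds, and a counterexample model with a multi-element domain can be "pulled back" along the map sending all objects to a single point while relativizing each predicate appropriately — or, conversely, a singleton-domain counterexample lifts trivially.

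Once we are reduced to singleton-domain models, first-order modal formulas over a one-object domain are essentially propositional: each atomic formula $P_i(y)$ (with $y$ the unique object) behaves exactly like a propositional letter $p_i$, equalities $t_1=t_2$ become constantly true, and the modal operators are unchanged. So I would set up a translation $\psi\mapsto\psi^{\flat}$ replacing each atom $P_i(y)$ by a fresh propositional letter $p_i$ (and each true equality by $\top$), and verify by induction on the structure of $\psi$ that for every frame $\mathfrak{F}$, $\mathfrak{F}\models\forall y\,\psi(y,\dots,y)$ if and only if $\mathfrak{F}\models\psi^{\flat}$ in the propositional sense. Running this over the whole defining set $\Lambda$ yields a set $\Lambda^{\flat}$ of propositional modal formulas defining the same class $K$, which is exactly PML-definability.

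The main obstacle I anticipate is the collapse lemma in the direction where a multi-element-domain model refutes the universal sentence but we must produce a singleton-domain refutation: one has to be careful that collapsing all objects to one point does not accidentally validate the matrix $\psi$. The resolution is that $\psi$ is quantifier-free, so its truth at a world depends only on finitely many atomic facts $P_i(a)$ for the specific witnesses $a$ realizing the refutation; fixing those witnesses $y_1=a_1,\dots,y_m=a_m$ and then forming the model on the singleton $\{*\}$ with $I'(w,P_i)=\{*\}$ iff $a_i\in I(w,P_i)$ (taking care when the same $P_i$ is applied to several of the $y_j$ — here one uses that if $P_i$ appears applied to $y_j$ and $y_k$ with $a_j\neq a_k$ we may first rename so that distinct variables get distinct values, or simply note that over a frame we are free to choose the refuting model and can take all $a_j$ equal without loss) preserves the truth value of every atomic subformula, hence of $\psi$, at every world along the accessibility paths involved. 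Carrying out this bookkeeping carefully — in particular handling equalities among the $y_j$ and repeated predicate applications — is the only delicate point; everything else is a routine induction.
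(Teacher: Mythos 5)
Your reduction to singleton-domain models does not work, and this is where the proof breaks. The collapse lemma you rely on --- $\mathfrak{F}\models\forall y_1\cdots\forall y_m\,\psi(y_1,\dots,y_m)$ iff $\mathfrak{F}\models\forall y\,\psi(y,\dots,y)$ --- is false. Take $\psi(y_1,y_2):=P(y_1)\rightarrow P(y_2)$. On any frame $\mathfrak{F}$ one can base a constant domain model with $D=\{a,b\}$ and $I(w,P)=\{a\}$ for all $w$, which refutes $\forall y_1\forall y_2\,(P(y_1)\rightarrow P(y_2))$ at every world; so the class defined by this universal sentence is the empty class of frames. Yet $\forall y\,(P(y)\rightarrow P(y))$, and likewise your translation $\psi^{\flat}=p\rightarrow p$, is valid on every frame, so your procedure would output a set of propositional formulas defining the class of \emph{all} frames instead of $K$. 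The point is that identifying the variables collapses the distinct atoms $P(y_1)$ and $P(y_2)$ into one, which can turn a refutable matrix into a tautology; a genuine refutation may essentially require two objects with different $P$-status, and neither of your proposed repairs helps: ``renaming so that distinct variables get distinct values'' does not remove this dependence, and ``we can take all $a_j$ equal without loss'' is exactly the false step. (The second half of your argument, translating one-variable universal sentences into propositional formulas, is fine; it is the reduction to that case that fails. Note also that the proposition itself survives my example only because the empty class happens to be PML-definable, so the statement is not refuted --- but your proof of it is.)

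The paper takes a different and sound route: by Proposition \ref{frmpreserv} the class $K$ is closed under bounded morphic images, generated subframes and disjoint unions, so by the propositional Goldblatt--Thomason theorem it suffices to show that $K$ reflects ultrafilter extensions. Given $ue(\mathfrak{F})\in K$ and a model $\mathfrak{M}$ on $\mathfrak{F}$ refuting some $\forall\bar{x}\,\psi(\bar{x})\in\Lambda$ at $w$, one builds a model $ue(\mathfrak{M})$ on $ue(\mathfrak{F})$ with the \emph{same} domain $D_{\mathfrak{M}}$, putting $(a_1,\dots,a_n)\in I^{ue}(u,P)$ iff $\{w'\mid (a_1,\dots,a_n)\in I^{\mathfrak{M}}(w',P)\}\in u$, and checks by induction that quantifier-free formulas with parameters transfer between $(\mathfrak{M},w)$ and $(ue(\mathfrak{M}),\hat{w})$; this contradicts $ue(\mathfrak{F})\models\Lambda$. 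If you want to salvage a direct argument, that transfer of the quantifier-free matrix with its original witnesses (rather than any collapse of the domain) is the idea you need.
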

\begin{proof}
Suppose that the class $K$ is defined by a set of universal modal formulas $\Lambda$. By Proposition \ref{frmpreserv} $K$ is closed under bounded morphic images, generated subframes, and disjoint unions.  We show that $K$ reflects ultrafilter extensions, and hence by the Goldblatt-Thomason theorem it is definable by a set of propositional modal formulas.

Assume that $ue(\mathfrak{F})\in K$ for some frame $\mathfrak{F}$. It is enough to show that $\mathfrak{F}\models\Lambda$. Otherwise for some universal sentence $\phi=\forall \bar{x}\psi(\bar{x})\in \Lambda$ there is a model $\mathfrak{M}$ based on $\mathfrak{F}$ and world $w\in W_\mathfrak{F}$ where $\mathfrak{M},w\not\models\phi$. Let $ue(\mathfrak{M})$ be a model based on $ue(\mathfrak{F})$ with the set $D_\mathfrak{M}$ as domain. Also for any $u\in Uf(W)$ and any $n$-ary predicate $P$, $(a_1,\dots,a_n)\in I^{ue}(u,P)$ if and only if $\{w\in W_\mathfrak{F}\ |\  (a_1,\dots,a_n)\in I^{\mathfrak{M}}(w,P)\}\in u$.  Then by induction on the complexity of formulas one may show that for any quantifier-free formula $\psi$, we have $\mathfrak{M},w\models \psi(a_1,\dots,a_n)$ if and only if $ue\mathfrak{M},\hat{w}\models\psi(a_1,\dots,a_n)$. Hence $ue(\mathfrak{M}),\hat{w}\not\models\phi$, a contradiction. 
\end{proof}

The following theorem is a version of the Goldblatt-Thomason theorem for first-order modal logic.  From now on, we assume that $\tau$ is a language containing for each $n\in\mathbb{N}$  a countable set of $n$-ary predicate and a countable set of constant symbols.  

\begin{theo}\label{gt}
Let $K$ be an elementary class of frames. Then $K$ is definable  by a set of first-order modal $\tau$-sentences if and only if it is closed under bounded morphic images, generated subframes,  and disjoint unions.
\end{theo}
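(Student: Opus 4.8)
The plan is to mirror the classical proof of the Goldblatt--Thomason theorem (Theorem 3.19 of \cite{blac:modal01}), but to use the stronger expressive power of FML to replace the ultrafilter-extension step with a saturation argument. The forward direction is already handled by Proposition \ref{frmpreserv}, so the work is entirely in the converse: assume $K$ is elementary and closed under bounded morphic images, generated subframes, and disjoint unions; we must produce a set of FML $\tau$-sentences defining it.

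First I would let $\Lambda = \{\phi : \phi \text{ is an FML } \tau\text{-sentence valid on every frame in } K\}$ and show that any frame $\mathfrak{F}$ validating all of $\Lambda$ lies in $K$. The standard move is to pass to a model. Fix $\mathfrak{F} \models \Lambda$; since $\tau$ is chosen rich enough (countably many predicates of each arity, countably many constants), build a constant-domain model $\mathfrak{M}$ on $\mathfrak{F}$ that is ``maximally descriptive'' — one can take $\mathfrak{M}$ to have domain $W$ itself and interpret predicates so that every subset-pattern is witnessed (the analogue of the clever valuation in the propositional proof). The FML-theory of $\mathfrak{M}$ is then consistent with (the first-order renderings of) the theory of $K$-models, so by compactness of FML — obtained via the standard translation $ST_u$ into two-sorted first-order logic, Proposition following Theorem \ref{invar} — there is a model $\mathfrak{N}$ elementarily equivalent to some model based on a frame in $K$, with $\mathfrak{M} \equiv \mathfrak{N}$ at a suitable world. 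Now pass to $\omega$-saturated elementary extensions $\mathfrak{M}^+ \succeq \mathfrak{M}$ and $\mathfrak{N}^+ \succeq \mathfrak{N}$; by the remark at the end of Section \ref{sec bisim} these are m-saturated, so the Hennessy--Milner Theorem gives a bisimulation $(\mathfrak{M}^+, w) \rightleftarrows (\mathfrak{N}^+, v)$ for corresponding worlds.

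The crucial structural point is then to read off a frame relationship from this bisimulation. Because $\mathfrak{M}$ was constructed to be maximally descriptive on $\mathfrak{F}$, the bisimulation $Z$ (restricted to its world-components) should witness that the underlying frame of $\mathfrak{N}^+$ has $\mathfrak{F}$ — or a point-generated subframe of it — as a bounded morphic image: the forth/back clauses of Definition \ref{bisim}(2) are exactly the bounded-morphism conditions, and the rich valuation forces injectivity/surjectivity patterns on the relevant worlds. Using that $\mathfrak{N}^+$'s frame is, via elementary equivalence and the elementarity of $K$, in $K$, and that $K$ is closed under generated subframes and bounded morphic images, one concludes $\mathfrak{F} \in K$ (invoking also that every frame is a bounded morphic image of the disjoint union of its point-generated subframes, as noted after Proposition \ref{frmpreserv}).

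The main obstacle I expect is the ``maximally descriptive model'' construction and verifying that the Hennessy--Milner bisimulation genuinely descends to a bounded morphism onto $\mathfrak{F}$. In the propositional Goldblatt--Thomason proof this is where ultrafilter extensions enter; here we must instead exploit that FML can quantify over the domain, using predicates indexed finely enough to pin down each world's successor pattern up to bisimulation. Making this precise — choosing the valuation on $\mathfrak{M}$ so that bisimilarity of worlds in the saturated extension collapses exactly to equality in $\mathfrak{F}$, and checking that $\omega$-saturation survives the two-sorted translation well enough to apply Hennessy--Milner — is the technical heart of the argument. Everything else (compactness, Łoś--Tarski-style preservation, the closure-operation bookkeeping) is routine given the machinery already developed in Section \ref{sec model}.
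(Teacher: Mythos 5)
There is a genuine gap at the step you yourself flag as the ``technical heart''. Your plan is to extract, from the Hennessy--Milner bisimulation between the saturated extensions $\mathfrak{M}^+$ and $\mathfrak{N}^+$, a surjective bounded morphism from (a generated subframe of) the frame of $\mathfrak{N}^+$ onto $\mathfrak{F}$ itself, and then invoke closure of $K$ under bounded morphic images. But that target is not attainable in general: under any ``maximally descriptive'' valuation the worlds of an $\omega$-saturated elementary extension behave like ultrafilter points over $W_\mathfrak{F}$, so the natural map induced by the bisimulation lands in $ue(\mathfrak{F})$ rather than in $\mathfrak{F}$; in the propositional Goldblatt--Thomason proof this is exactly the place where reflection of ultrafilter extensions is invoked, and Example \ref{example} of the paper shows that this property is unavailable for FML-definable classes (and elementarity plus the three closure conditions do not restore it). Even if you strengthen the valuation so that every world of $\mathfrak{F}$ is named by a constant in the domain, a saturated model will contain worlds whose unique $P$-witness is a nonstandard element not denoted by any $c_w$, so there is simply no well-defined map onto $W_\mathfrak{F}$, let alone a bounded morphism. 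Thus ``bisimilarity collapses to equality in $\mathfrak{F}$'' cannot be forced, and the argument as sketched breaks precisely where the ultrafilter-extension obstruction reappears.

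The paper's proof avoids this by reversing the direction of the final comparison: one never tries to map anything onto $\mathfrak{F}$. Starting from a point-generated $\mathfrak{F}\models\Lambda_K$, one builds on $\mathfrak{F}$ a constant-domain model whose domain is $W_\mathfrak{F}$, with a unary $P$ marking the current world, a rigid binary predicate coding $R_\mathfrak{F}$ on the domain, constants $c_w$ naming all worlds, and Henkin-style witness sentences; by Claim 1 and compactness (over the first-order theory of the elementary class $K$) this theory $\Delta$ has a model $(\mathfrak{N},v_0)$ on a frame $\mathfrak{G}\in K$. From the domain of $\mathfrak{N}$ one extracts a frame $\mathfrak{G}'$ which is a bounded morphic image of $\mathfrak{G}$ (hence $\mathfrak{G}'\in K$), and into which $\mathfrak{F}$ embeds as a first-order elementary $\{R\}$-substructure; the elementarity of $K$ (closure under elementary equivalence) then yields $\mathfrak{F}\in K$. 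No saturation or Hennessy--Milner argument is needed, and the role you assign to a bounded morphism onto $\mathfrak{F}$ is played instead by elementary equivalence with a member of $K$. If you want to salvage your outline, replace the ``bounded morphism onto $\mathfrak{F}$'' goal by this elementary-embedding goal; the rest of your bookkeeping (forward direction, reduction to point-generated frames, compactness via the standard translation) is consistent with the paper.
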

\begin{proof}
If $K$ is definable by a set of first-order modal sentences then by Proposition \ref{frmpreserv} it is closed under the three operations as required by the theorem. 

For the other direction, suppose $K$ is closed under bounded morphic images, generated subframes,  and disjoint unions. Let $\Lambda_K$ be the set of all first-order modal $\tau$-sentences valid on any frame $\mathfrak{G}\in K$.  We show that $\Lambda_K$ defines $K$. Clearly, $\Lambda_K$ is valid on any frame of $K$. Conversely, assume $\mathfrak{F}\models\Lambda_K$. Since $K$ is closed under bounded morphic images, generated subframes,  and disjoint unions, without loss of generality we assume that $\mathfrak{F}$ is point-generated by $w_0$.

Let $\tau'= \{R,P\}\cup \{c_w\ | w\in W\}$ be a language consisting of a unary predicate $P$,  a binary predicate $R$,  and new constants $c_w$ for any $w\in W$.  
Let $\mathfrak{M}=(\mathfrak{F},D,I)$ be a model based on $\mathfrak{F}$ such that $D=W$. In each world $w$, put $I(w,P)=\{w\}$.   In all worlds, $R(w,w')$ holds if and only if $(w,w')\in R_\mathfrak{F}$. Finally interpret each constant $c_w$  as $w$.

Take $\Delta$ as the set of all $\tau$-sentences true in $(\mathfrak{M},w)$. Since $\mathfrak{F}$ is point-generated by $w_0$, for each $w\in W$ there is a finite number $n_w$ which is the length of the shortest path from $w_0$ to $w$. So for each $w,w'\in W$ and for all $n\in \mathbb{N}$, $\Delta$ contains the following sentences.
\begin{enumerate}
\item $\lozenge^{n_w} P(c_w)$. \label{cod}
\item $\square^n c_w\neq c_{w'}$, if $w\neq w'$. \label{ineq}
\item $\square^n \exists x (P(x)\wedge \forall y (P(y)\rightarrow x=y))$. \label{uniq}
\item $\square^n R(c_w,c_{w'})$, if $(w,w')\in R$. \label{R(cwcw')}
\item $\square^n \neg R(c_w,c_{w'})$, if $(w,w')\not\in R$.\label{not R(cwcw')}
\item $\square^n \forall x\forall y (P(x)\wedge \lozenge P(y) \rightarrow R(x,y))$. \label{8}
\item $\square^n \forall x\forall y (P(x)\wedge R(x,y)\rightarrow \lozenge P(y))$. \label{9}
\item $\square^n (\exists x (P(x)\wedge \phi(x, c_1,\dots, c_n))\rightarrow \phi(c_\phi,c_1,\dots ,c_n))$,  for any $\phi$ as a $\{R\}$-formula and for some $c_\phi$. \label{10}
\item $\forall x_1,\dots, x_k (\phi(x_1,\dots , x_k) \rightarrow\square^n \phi(x_1,\dots, x_k))$, for any  $\{R\}$-formula $\phi$ . \label{11}
\item $\forall x_1,\dots, x_k (\lozenge^n\phi(x_1,\dots, x_k) \rightarrow \phi(x_1,\dots, x_k))$, for any $\{R\}$-formula $\phi$ . \label{12}
\end{enumerate}

\begin{description}
\item[Claim1.]
$\Delta$ is finitely satisfiable in $K$. 

Otherwise there is  $\delta\in \Delta$ which is not true in any constant domain model based on the frames of $K$. So $\neg\delta \in \Lambda_K$. But this contradicts with the fact that $\mathfrak{F}\models\Lambda_K$.
\end{description}

Hence there is a frame $\mathfrak{G}\in K$ and a constant domain model $(\mathfrak{N},v_0)$ based on $\mathfrak{G}$   such that $\mathfrak{N},v_0\models \Delta$. Once again without loss of generality we assume that $\mathfrak{G}$ is generated by $v_0$.

Now let $\mathfrak{G}'=(W',R')$ where $W'=\{a\in D_\mathfrak{N}\ | \ \mathfrak{N},v\models P(a), \text{ for some } v\in \mathfrak{G}\}$, and $(a,b)\in R'$ if and only if $\mathfrak{N}\models R(a,b)$. 

\begin{description}
\item[Claim 2.] $\mathfrak{F}$ is isomorphic to a first-order elementary submodel of $\mathfrak{G}'$ as a model in the language $\{R\}$.

Define the map $f: W\rightarrow W'$  sending any $w$ to $(c_w)^\mathfrak{N}$, the interpretation of constant $c_w$ in $\mathfrak{N}$. 
By  conditions \ref{cod} and \ref{ineq}  $f$ is an injective function. Furthermore, $f$ is an elementary embedding, since conditions \ref{R(cwcw')}, \ref{10}, and \ref{12} hold.

\item[Claim 3.] $\mathfrak{G}\twoheadrightarrow\mathfrak{G}'$.

Let $h: \mathfrak{G}\rightarrow \mathfrak{G}'$ be a function such that $h(v)=a_v$ where $a_v$ is the unique element of $D_\mathfrak{N}$ such that $P(a_v)$ is true in $v$. 
Since $\mathfrak{G}$ is generated by $v_0$ and $(\mathfrak{N},v_0)$ satisfies condition \ref{uniq} from $\Delta$,  $h$ is a function and is by definition surjective. The forth property is obtained by  conditions \ref{8} and \ref{12}, and the condition \ref{9}  implies the back property. So $\mathfrak{G}'$ is a bounded morphic image of $\mathfrak{G}$. 
\end{description}
By closure of $K$ under bounded morphic images we have $\mathfrak{G}'\in K$. Now since $K$ is elementary, Claim 2  implies that $\mathfrak{F}\in K$.  
\end{proof}

\section{Lindstr\"om Theorem for First-Order Modal Logic}\label{character}

In this section we  present various versions of Lindstr\"om's theorem for first-order modal logic. These theorems can be seen as some model theoretic characterizations of this logic.
 First the notion of an abstract logic is reviewed.

\subsection{Abstract Logic}

Throughout this section all the languages are assumed to be relational. So any language $\sigma=\mathcal{P}\cup\mathcal{C}$ is assumed to be a relational language, where $\mathcal{P}$ and $\mathcal{C}$ are a set of  predicate and constant symbols, respectively. In some cases we use the notation $(P,n)$ to indicate that $P$ is an $n$-ary predicate.

For a relational language $\sigma$, by a $\sigma$-structure (or $\sigma$-model)  we mean a  pointed varying domain Kripke $\sigma$-model. Let $Str[\sigma]$ be the class of all $\sigma$-structures.

For given $\mathfrak{M}\in Str[\sigma]$ and $\sigma'$ an expansion of $\sigma$, 
one may  expand $\mathfrak{M}$ by interpreting  any new symbols of  $\sigma'\diagdown\sigma$ inside $\mathfrak{M}$ and form a $\sigma'$-structure $\mathfrak{M}'$. 
In this situation, $\mathfrak{M}$ is called the reduct of $\mathfrak{M}'$ to the language $\sigma$; this is denoted by $\mathfrak{M}=\mathfrak{M}'\upharpoonright\mathcal{\sigma}$.

\begin{defi}\label{abstract}
An \textit{abstract logic} is a pair $(L,\models_L)$, where
\begin{enumerate}
\item  $L$ is a map that assigns to each language $\sigma$ a class $L[\sigma]$; whose members are called  (abstract) $\sigma$-sentences. Likewise an $L[\sigma]$-theory is a set of $L[\sigma]$-sentences,
\item  $\models_L$ is a satisfaction relation, that is  a binary relation between $\sigma$-structures and $\sigma$-sentences,
 \end{enumerate}
 which satisfies the following properties: 
\begin{itemize}
\item if $\sigma\subseteq \sigma'$, then $L[\sigma]\subseteq L[\sigma']$.
\item (Occurrence)  For each $\phi\in L$ there is a finite language $\sigma_\phi$, such that for any $\sigma$-structure $(\mathfrak{M},w)$, $\mathfrak{M},w\models_L\phi$ is defined if and only if $\sigma_\phi\subseteq\sigma$.
\item (Expansion) For $\sigma\subseteq\sigma'$, if $\sigma'$-structure $(\mathfrak{M}',w)$ is an expansion of $\sigma$-structure $(\mathfrak{M},w)$, then for any $\phi$ with $\sigma_\phi\subseteq \sigma$,  $\mathfrak{M},w\models_L\phi$ implies $\mathfrak{M}',w\models_L\phi$. 
\end{itemize}
\end{defi}

An abstract logic $L'$ extends $L$, and denoted by $L\subseteq L'$, if for any $\sigma$ and any sentence $\phi\in L[\sigma]$, one can find a sentence $\psi\in L'[\sigma]$ such that for a give $\sigma$-model $(\mathfrak{M},w)$, 
$$\mathfrak{M},w\models_L\phi\ \text{ if and only if }\ \mathfrak{M},w\models_{L'}\psi.$$
Two abstract logic $L$ and $L'$ are equivalent, $L\equiv L'$, if $L\subseteq L'$ and $L'\subseteq L$.

The following definitions single out some conditions which can be satisfied by some abstract logic, in particular by FML.

\begin{defi}\label{abstract2}
\begin{itemize}
\item (Closure under boolean and modal connectives) For any language $\sigma$, $L$ contains all atomic sentences of $\sigma$ and is closed under boolean connectives $\neg$, $\vee$ and $\wedge$ and modal operators $\square$ and $\lozenge$. Furthermore, the satisfaction of atomic sentences as well as boolean connectives and modal operator have the same satisfaction relation in  $\models_L$ as in FML. 
\item (Closure under quantifiers) For any $\phi\in L[\sigma]$ and each constant $c\in \sigma$ there are sentences $\exists x_c\phi(x_c)$ and $\forall x_c\phi(x_c)$ in $L[\tau\diagdown\{c\}]$  such that for any $\sigma\diagdown\{c\}$-structure $(\mathfrak{M},w)$,
\begin{itemize}
\item $\mathfrak{M},w\models_L \exists x_c\phi(x_c) \  \text{ if and only if } \   \mathfrak{M}_d,w\models\phi, \ \text{ for some } d\in D(w).$ 
\item $\mathfrak{M},w\models_L \forall x_c\phi(x_c) \  \text{ if and only if } \   \mathfrak{M}_d,w\models\phi, \ \text{ for all } d\in D(w),.$
\end{itemize}
(The model $\mathfrak{M}_d$ is an expansion of $\mathfrak{M}$ to a $\sigma$-structure which assigns $d$ as an interpretation of $c$).
\end{itemize}
\end{defi}

Any bijective map $\rho: \sigma\rightarrow \sigma'$ is a \textit{renaming} if it maps any relation symbol to a relation symbol with the same arity and any constant symbol to a constant symbol. For any $\sigma$-structure $\mathfrak{M}$ and any renaming $\rho:\sigma\rightarrow\sigma'$, we can define a corresponding structure $\rho[\mathfrak{M}]\in Str[\sigma']$ which is defined using $\rho$ in the obvious way, as a counterpart to $\mathfrak{M}$. The interpretation of an element $Q\in\sigma'$ is the interpretation of $\rho^{-1}(Q)$.

\begin{defi}\label{renrel}
(Renaming) An abstract logic L has the renaming property if for any  renaming $\rho$ of language $\sigma$ and for any $L[\sigma]$-sentence $\phi$ there is an $L[\rho(\sigma)]$-sentence $\rho(\phi)$ such that for every $\sigma$-structure $(\mathfrak{M},w)$,
$$\mathfrak{M},w\models_L\phi \ \ \ \text{ if and only if } \ \ \ \rho(\mathfrak{M}),w\models_L\rho(\phi).$$
\end{defi}

We say that a constant domain  model $\mathfrak{M}$ is closed under the predicate $P(x,c_1\dots,c_n)$ if for any $w$ and $w'$ in $W$ if $\mathfrak{M},w\models\exists x P(x,c_1,\dots,c_n)$ and $\mathfrak{M},w'\models\exists x P(x,c_1,\dots,c_n)$, then for any $d\in D$ we have $\mathfrak{M},w\models P(d,c_1,\dots,c_n)$ if and only if $\mathfrak{M},w'\models P(d,c_1,\dots,c_n)$.

Next we want to define the notion of relativization property for an abstract logic.
This property is only defined for constant domain Kripke models, since relativization is used in Theorem \ref{lind} which is only valid for constant domain models.
\begin{defi}\label{relativ}
(Relativization) For any $L[\sigma]$-sentence $\phi$ and any predicate $P(x,c_1,\dots,c_n)$ where $P$ and $c_1,\dots,c_n$ are not in $\sigma$, there exists a sentence $\phi^{P}$ in $L[\sigma\cup \{P,c_1,\dots, c_n\}]$, such that for any constant domain $\sigma\cup \{P,c_1,\dots, c_n\}$-structure $(\mathfrak{M},w)$ closed under $P(x,c_1,\dots,c_n)$,
$$\mathfrak{M},w\models_L\phi^{P}\ \text{ if and only if }\ \mathfrak{M}^{P},w\models_L\phi,$$
where $\mathfrak{M}^{P}$ is a $\sigma$ model satisfies, 
\begin{itemize}
\item  $W^{P}=\{w'\in W\ |\ \mathfrak{M},w'\models\exists x P(x,\bar{c})\}$.
\item  $D^{P}=\{ a\in D \ |\ \mathfrak{M},w\models P(a,\bar{c}), \text{ for any } w\in W^P \}$.
\item $R^{P}= R\upharpoonright W^{P}$.
\item For any $w'\in W^{P}$,  $I^{P}(w',-)= I(w',-)\upharpoonright D^{P}$. 
\end{itemize}
\end{defi}

\begin{rem}\emph{
First-order modal logic has the relativization property. To see this, for a given first order modal $\sigma$-sentence $\phi$ and atomic formula $P(x,\bar{c})$ we let the relativization of $\phi$ to $P(x,\bar{c})$ be the sentence $\phi^{P}:=\psi^P\wedge \exists x P(x,\bar{c})$ where $\psi^P$ is defined inductively as follows:
\begin{itemize}
\item If $\phi$ is an atomic sentence, then $\psi^P=\phi$.
\item If $\phi= \exists x\theta(x)$, then $\psi^P= \exists x (P(x,\bar{c})\wedge \theta^P(x))$.
\item If $\phi = \lozenge\theta$, then $\psi^P = \lozenge(\exists x P(x,\bar{c})\wedge \theta^P)$.
\end{itemize}
}\end{rem}

The following  properties highlight some of model theoretic features which are true in FML and can be used for its characterization.

\begin{defi}
Let $(L,\models_L)$ be an abstract logic. 
\begin{itemize}
\item  $L$ has the \textit{(countable) compactness property} if for every (countable) set $T$ of $L[\sigma]$-sentences, $T$ has a model provided that every finite subset of $T$ has a model.
\item  $L$ is \textit{invariant under bisimulations}, if for  every two $\sigma$-structures $(\mathfrak{M},w)$ and $(\mathfrak{N},v)$, such that $(\mathfrak{M},w)\rightleftarrows(\mathfrak{N},v)$ the two structures satisfy the same set of $L$-sentences, that is $\mathfrak{M},w\models_L\phi$ if and only if $\mathfrak{N},v\models_L\phi$, for any $\phi\in L[\sigma]$. 
\item $L$ has the \textit{Tarski union property} (TUP) if for any elementary chain of $\sigma$-structures $\langle \mathfrak{M}_i: i\in\mathbb{N}\rangle$, the union $\bigcup_{i\in\mathbb{N}}\mathfrak{M}_i$ is an elementary extension of each $\mathfrak{M}_i$. 
\end{itemize}
\end{defi}

From now on, since there is no danger of ambiguity, for brevity we omit the subscript $L$ from $\models_L$.

\subsection{Lindstr\"om Theorem}\label{seclind}

In this subsection,  we provide a version of Lindstr\"om's theorem for FML.  
 Here it will be shown that FML with respect to constant domain Kripke models, is a maximal logic which satisfies compactness and bisimulation invariance properties. The proof naturally mimics in a more elaborate way the existing proof for first-order logic. The main proof strategy behind the classical Lindstr\"om theorem is to encode partial isomorphisms between two models in an expanded language. More precisely, by way of contradiction, if a given $\sigma$-sentence $\phi$ in an abstract logic $L$ does not have a first-order equivalent $\sigma$-sentence, then one could obtain two sequence of first-order structures $\langle \mathfrak{M}_i : i\in \mathbb{N}\rangle$ and    $\langle \mathfrak{N}_i : i\in \mathbb{N}\rangle$ such that $\mathfrak{M}_k$ and $\mathfrak{N}_k$ are $k$-partial isomorphic but do not agree on $\phi$. Now one can suitably expand the language $\sigma$ to $\sigma'$ in which the notion of $k$-partial isomorphism for $\sigma$-structures as well as basic properties of the structure $(\mathbb{N} , \leq)$ can be formalized. So after providing such suitable expansion $\sigma'$ using compactness and the L\"owenheim-Skolem property one could obtain two countable $\sigma$-structures $\mathfrak{M}_H$ and $\mathfrak{N}_H$ indexed by a non-standard number $H$ in an elementary expansion of the structure $(\mathbb{N},\leq)$ which are $H$-partial isomorphic. On the other hand, since $H$ is a non-standard element, it follows that $\mathfrak{M}_H$ and $\mathfrak{N}_H$ are partially isomorphic and therefore isomorphic by countability of $\mathfrak{M}_H$ and $\mathfrak{N}_H$ but do not agree on $\phi$. This contradicts the Isomorphism property of abstract logic $L$.
 
Now there is a subtlety for extending this proof to FML. This is due to the fact that in the case of first-order modal logic, the logic cannot quantify over possible worlds in the same way as elements of the domain. To overcome to this difficulty we expand the language with some new predicates for ``worlds'' and ``accessibility relation''. The expanded language provides some new predicates for possible worlds and accessibility relation in addition to components which are provided in the original proof. So this expansion enables us to encode a bisimulation of two models and prove the following theorem. 

Note that in the following theorem we restrict ourselves to constant domain Kripke models and furthermore assume that an abstract logic has the closure properties of Definition \ref{abstract2} and the renaming and the relativization properties in the Definition \ref{renrel} and \ref{relativ}, respectively.

\begin{theo}\label{lind}
Any  abstract logic $L$ containing first-order modal logic is equivalent to FML if and only if it is countably compact and invariant under bisimulations. 
\end{theo}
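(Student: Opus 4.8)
The plan is to adapt the classical Lindström argument, using the standard translation $ST_u$ to view FML models as two-sorted first-order structures and using the $(l,r)$-bisimulation machinery (Propositions \ref{fbisim}, \ref{ckf}) as the modal surrogate for $k$-partial isomorphism. One direction is immediate: FML is countably compact (it embeds into two-sorted FOL via $ST_u$, and one invokes the compactness theorem for FOL applied to the $\tau^{cor}$-translations together with the axioms saying ``$R$ and $E$ behave correctly''), and it is invariant under bisimulations by Proposition \ref{bis}. So assume $L \supseteq \text{FML}$ is countably compact and bisimulation-invariant; we must show $L \subseteq \text{FML}$, i.e. every $L[\sigma]$-sentence $\phi$ is logically equivalent over constant-domain pointed models to an FML $\sigma$-sentence.

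Suppose not. Since $L$ is bisimulation-invariant, $\phi$ cannot be separated from its FML-consequences by any finite degree $(l,r)$; by Proposition \ref{ckf} (finite-$\tau$ reduction — here one uses the Occurrence property to restrict to the finite language $\sigma_\phi$) this yields, for each $k\in\mathbb{N}$, pointed constant-domain models $(\mathfrak{M}_k,w_k)\models\phi$ and $(\mathfrak{N}_k,v_k)\models\neg\phi$ with $(\mathfrak{M}_k,w_k)\rightleftarrows_{k}(\mathfrak{N}_k,v_k)$ (using the bijective coding $\langle\cdot,\cdot\rangle$ so that a single natural number parametrizes the pair). Now build a large expanded language $\sigma^+$: it contains two disjoint copies of the predicates of $\sigma_\phi$ (to hold the disjoint union $\mathfrak{M}_k\uplus\mathfrak{N}_k$), a predicate $\leq$ and constants naming a ``clock'' structure meant to be an elementary extension of $(\mathbb{N},\leq)$, a unary predicate distinguishing the two halves, a constant $H$ for an index, and — the modal novelty — predicates $\mathbf{W}(u)$ and $\mathbf{R}(u,u')$ so that the bisimulation relations $Z_{(i,j)}$ can themselves be coded by a predicate $\mathbf{Z}(k,u,u',\bar{x},\bar{y})$ linking worlds-with-parameters on the two sides. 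One writes down a countable first-order (indeed FML, after relativization) theory $T$ asserting: the clock satisfies the elementary diagram of $(\mathbb{N},\leq)$; $H$ is an element of the clock; for each standard $k$ there is a stage exceeding $k$; and $\mathbf{Z}$ satisfies the back-and-forth clauses (iii),(iv) and the atomic-harmony clause (ii) of the $(l,r)$-bisimulation definition, indexed downward by the clock; and the two distinguished worlds are $\mathbf{Z}$-related at level $H$; and ``left half'' $\models\phi$, ``right half'' $\models\neg\phi$ (this is where closure under $\neg,\wedge$ and relativization of $L$-sentences to the two halves is used). Each finite fragment of $T$ is satisfied by taking the clock to be $(\mathbb{N},\leq)$, $H$ large enough, and $(\mathfrak{M}_k,w_k)\uplus(\mathfrak{N}_k,v_k)$ with the honest finite-level bisimulation; so by countable compactness of $L$ (which extends FML) $T$ has a model, and by the Löwenheim–Skolem property for $L$ — here one must argue $L$ has downward LS, which in this setup follows from countable compactness plus the fact that $L\supseteq\text{FML}$, or is imported as part of the standard package — a countable one, call its two halves $(\mathfrak{M}_H,w_H)$ and $(\mathfrak{N}_H,v_H)$.

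In this countable model, since $H$ is non-standard (it dominates every standard $k$), the family $\{\,\mathbf{Z}(\ell,\cdot,\cdot,\cdot,\cdot) : \ell\le H\,\}$ restricted to standard $\ell$ is an infinite descending chain that, by the back-and-forth clauses, assembles into a genuine (un-indexed) bisimulation $Z$ between $\mathfrak{M}_H$ and $\mathfrak{N}_H$ with $(w_H,\lambda)\,Z\,(v_H,\lambda)$: the $\lozenge$-forth/back and $\exists$-forth/back requirements of Definition \ref{bisim} hold because at any standard level we still have room to step down. Hence $(\mathfrak{M}_H,w_H)\rightleftarrows(\mathfrak{N}_H,v_H)$, so by bisimulation invariance of $L$ they agree on $\phi$ — contradicting that the two halves were forced to satisfy $\phi$ and $\neg\phi$ respectively. (Unlike the FOL case, no ``countable $+$ partially isomorphic $\Rightarrow$ isomorphic'' step is needed: bisimulation invariance is weaker than isomorphism invariance and is available directly, which is exactly why the modal argument can dispense with countability being used for a Cantor back-and-forth and instead just needs it to keep the model small enough for compactness-style control — in fact countability is not even essential here, only that $L$ is bisimulation-invariant.)

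The main obstacle I expect is the faithful first-order coding of the $(l,r)$-bisimulation \emph{with its two-dimensional, world-plus-object indexing} inside $\sigma^+$ in a way that (a) each finite piece is realized by the honest finite-stage bisimulations coming from Proposition \ref{ckf}, and (b) the non-standard instance genuinely glues into an object satisfying all four clauses of Definition \ref{bisim} simultaneously — in particular handling the interaction between the $\lozenge$-moves (which change the world but not the object-tuple) and the $\exists$-moves (which extend the tuple but not the world), and making sure the predicate $\mathbf{W}$/$\mathbf{R}$ correctly recover the accessibility structure of each half after relativization. Getting the bookkeeping of this expanded signature right, and checking that the relativization and renaming properties of $L$ suffice to push the $L$-sentence $\phi$ down onto each half of the disjoint union, is the delicate part; the rest is the standard non-standard-clock argument.
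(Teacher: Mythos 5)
Your proposal follows essentially the same route as the paper's proof: restrict to the finite signature $\sigma_\phi$, use Proposition \ref{ckf} to obtain, for every degree, a pair of $(l,r)$-bisimilar pointed models disagreeing on $\phi$, encode worlds, accessibility, domains and the staged bisimulation relations as first-order data over an expanded signature evaluated in one big pointed Kripke model, relativize $\phi$ and $\neg\phi$ to the two encoded halves, apply countable compactness to reach a model with a non-standard stage, decode a genuine bisimulation from the non-standard-level relation, and contradict bisimulation invariance. Your variant (a single pair of halves plus a constant $H$ forced above all numerals, rather than the paper's family of all pairs indexed inside one model) is an inessential difference, and your closing remark is right that no countability or isomorphism step is needed. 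On that point, though, the Löwenheim--Skolem detour should simply be deleted: downward LS does \emph{not} follow from countable compactness plus $L\supseteq\mathrm{FML}$ (that is precisely what Lindstr\"om-type hypotheses are for), and since you never use countability, the claim is harmless only because it is dispensable.

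The one step that would fail as written is the final ``at any standard level we still have room to step down.'' The $\lozenge$-clauses decrement $\textit{left}$ of the level and the $\exists$-clauses decrement $\textit{right}$, so what is needed is that \emph{both coordinates} of the level at which the two roots are $\mathbf{Z}$-related are non-standard. A single code $H$ dominating every standard number does not guarantee this: with a bijective pairing one can have $H=\langle 3,K\rangle$ with $K$ non-standard, and then the decoded relation loses $\lozenge$-forth after three modal steps. You must either add axioms forcing $\textit{left}(H)$ and $\textit{right}(H)$ separately above every numeral, or do as the paper does and pick two non-standard indices $H_1,H_2$, working at level $\langle H_1,H_2\rangle$; relatedly, the decoded relation $Z'$ should only collect pairs related at levels whose two coordinates are non-standard, not at arbitrary levels. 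With that repair, and once the transfer axioms tying $\mathbf{W},\mathbf{R},\mathbf{Z}$ and the starred atomic predicates at the distinguished world to the actual accessibility relation, per-world domains and atomic facts of the halves are written out (this bookkeeping is the bulk of the paper's proof, and you correctly flag it, including the interaction of $\lozenge$-moves and $\exists$-moves, as the delicate part), your argument coincides with the paper's.
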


\begin{proof}
Suppose on the contrary that there is a sentence $\phi\in L$ which is not equivalent to any formula of FML. By the Expansion and Occurrence properties, we can assume that $\sigma=\sigma_\phi$ and therefore $\sigma$ is finite. So by Proposition \ref{ckf}, for each $(l,r)\in\mathbb{N}\times \mathbb{N}$, there are two models $(\mathfrak{M},w)$ and $(\mathfrak{N},v)$ which are $\langle l,r\rangle$-bisimilar but do not agree on $\phi$. By bisimulation invariance property we can assume that $(\mathfrak{M},w)$ and $(\mathfrak{N},v)$ are point-generated models.

So  for each $k\in\mathbb{N}$, take $(\mathfrak{M}_k,w_k)$ and $(\mathfrak{N}_k,v_k)$ to be two pointed $k$-bisimilar models such that $\mathfrak{M}_k,w_k\models_L\phi$ but $\mathfrak{N}_k,v_k\models_L\neg\phi$.

Like the proof of Lindstr\"om's theorem for first-order logic (see Theorem 2.5.4 in \cite{chang:model73}), we want to construct a new model $(\mathfrak{C},c_0)$ which has $\mathfrak{M}$ and $\mathfrak{N}$ as a submodel and we would like to be able to say that for each $k\in \mathbb{N}$, a model $\mathfrak{C}$ has two $k$-bisimilar submodels which  disagree on $\phi$. To this end, we introduce a suitable expanded language $\sigma_{Lind}=\sigma_{skeleton}\cup \sigma_{bisim}$  which is introduced to prove the desired properties. 
By the Isomorphism property, we may further suppose that all  the constants of $\sigma$ have the same interpretation in all the $\mathfrak{M}_k$. Moreover, their domains $D_k$, are disjoint from $\mathbb{N}$. Now let $\mathfrak{M}$ be the union of $\mathfrak{M}_k$s which has each $\mathfrak{M}_k$ as a submodel. Respectively, $\mathfrak{N}$ can be constructed in a similar way by the additional assumption that the interpretation of each constant of $\sigma$ is equal to the interpretation of it in $\mathfrak{M}$. 

\textbf{First step, defining \boldmath{$\sigma_{skeleton}$}:} A $\sigma_{skeleton}$ carries out the task of extracting the frame structure of a $\sigma$-model and encoding it as a part of a domain of a  $\sigma_{skeleton}$-model. 

Hence we need to add the set of worlds and the set of natural number to the domain. Furthermore, we want to be able to distinguish between the elements of each of these sets, so one may add three new different predicates to the language for each of the sets, domain, worlds, and index. To avoid further complications, and make our sentences simpler, we present a language $\sigma_{Lind}$ as a many-sorted language instead of using three above predicates.

A $\sigma_{skeleton}$ has three sorts  ``world'', ``object'' and ``index''. We use the letters $\underline{w},\underline{v},\underline{w}',\underline{v}'$ for variables of the world sort, $x,y,x',y'$ for variables of the object sort  and letters $k,l,n,r,s$ for variables of the index sort. 

Let $\sigma_\mathfrak{M}$ be the language consisting of five new predicates, $(world_\mathfrak{M},2)$, $(root_\mathfrak{M},2)$, $(dom_\mathfrak{M},2)$ and $(rel_\mathfrak{M},3)$ and $(mod_\mathfrak{M},2)$.

Expand the model $\mathfrak{M}=(W_\mathfrak{M},R_\mathfrak{M},D_\mathfrak{M}, I_\mathfrak{M})$ to the model $\mathfrak{M}'=(W_\mathfrak{M}, R_\mathfrak{M}, D'_\mathfrak{M}, I'_\mathfrak{M})$ in $\sigma\cup\sigma_\mathfrak{M}$ as follows. Put $D'_\mathfrak{M}= D_{\mathfrak{M}} \cup W_\mathfrak{M} \cup\mathbb{N}$. (Note that $W_\mathfrak{M}=\bigcup_{n\in \mathbb{N}} W_{\mathfrak{M}_n}$.) While $D_{\mathfrak{M}}$ is the interpretation of object sorts, $W_\mathfrak{M}$ is for world sorts and $\mathbb{N}$ is for index sorts.  
An element $w$ of  $W_\mathfrak{M}$ in $D'_\mathfrak{M}$ is denoted by $\textbf{w}$.
To describe $I'_\mathfrak{M}$ we need to give interpretations to new predicates.
\begin{itemize}
\item $world_\mathfrak{M}(\textbf{w},m)$ is true in $w$ if and only if $w \in W_{\mathfrak{M}_m}$ that is $I'_\mathfrak{M}(w,world_\mathfrak{M})=\{(\textbf{w},m)\}$.
\item $root_\mathfrak{M}(\textbf{w},m)$ holds exactly in $w$ when $w$ is the root of $\mathfrak{M}_m$.
\item In any $w\in W_{\mathfrak{M}_m}$, $dom_\mathfrak{M}(a,m)$ is true in $w$ if  $a\in D_{\mathfrak{M}_m}$.
\item $rel_\mathfrak{M}(\textbf{w},\textbf{w}',m)$ is true in $w$ if and only if $wR_{\mathfrak{M}_m}w'$. 
\item $I'_\mathfrak{M}(w,mod_\mathfrak{M})=\{ (a,m) \ | \ a\in D_{\mathfrak{M}_m}(w)\}$ for all $w\in W_{\mathfrak{M}_m}$.
\end{itemize}

In a similar way the language $\sigma_\mathfrak{N}$ and subsequently $\mathfrak{N}'$ can be constructed. 

Let $\sigma_{skeleton}$ be the language which adds to $\sigma \cup \sigma_\mathfrak{M} \cup \sigma_\mathfrak{N}$ one constant symbol $\lambda$ and three  new binary predicates $dom_\mathfrak{M}^\star$ and $dom_\mathfrak{N}^\star$ and $\leq$ together with a new $(n+1)$-ary predicate symbol $P^\star$ for each $n$-ary predicate symbol $P$ in $\sigma$.

Now construct a new pointed Kripke $\sigma_{skeleton}$-model $(\mathfrak{C},c_0)=\left(W_\mathfrak{C} , R_\mathfrak{C} , D_\mathfrak{C} ,  I_\mathfrak{C},  c_0\right)$ as follows:
$$W_\mathfrak{C}:=W_\mathfrak{M}\uplus W_\mathfrak{N}\uplus\{c_0\},$$ 
$$D_\mathfrak{C}:= D'_\mathfrak{M} \cup D'_\mathfrak{N} \cup \bigcup_{n\in \mathbb{N}} D^\star_{\mathfrak{M}_n} \cup \bigcup_{n\in \mathbb{N}}D^\star_{\mathfrak{N}_n},\footnote{Recall that $D^\star $ is the set of all finite subset of a set $D$} $$  
and 
$$R_\mathfrak{C}:= R_\mathfrak{M} \uplus R_\mathfrak{N} \uplus \{(c_0,w)\ |\ w\in W_\mathfrak{M}\uplus W_\mathfrak{N}\}.$$

For any $w\in W_\mathfrak{M}$ (resp. $w\in W_\mathfrak{N}$), all the symbols of $\sigma\cup \sigma_\mathfrak{M}$ (resp. $\sigma \cup \sigma_\mathfrak{N}$) have the same interpretation as in $\mathfrak{M}'$ (resp. $\mathfrak{N}'$). Also for any $n$-ary predicate $P$ in $\sigma$ and $w\in W_\mathfrak{M}$,  $I(w,P^\star)=\{(\textbf{w},a_1,\dots,a_n)\ |\ (a_1,\dots,a_n)\in I_\mathfrak{M}(P,w)\}$. Similarly interpret any $P^\star$ in any $w\in W_N$. The rest of symbols are interpreted by empty set, whenever $w\in W_M\uplus W_N$. 

Now we aim that the interpretation of symbols $\sigma_\mathfrak{M}\cup \sigma_\mathfrak{N}$ except the predicates $mod_\mathfrak{M}$ and $mod_\mathfrak{N}$ in $c_0$ reflects the structure $\mathfrak{M}\uplus \mathfrak{N}$. So  for instance we define
\begin{itemize}
\item  $(\textbf{w},m)\in I_\mathfrak{C}(c_0,world_\mathfrak{M})$ if and only if $w\in W_{\mathfrak{M}_m}$,
\item and $rel_\mathfrak{M}(\textbf{w},\textbf{w}',m)$ is true in $c_0$ if and only if $w R_{\mathfrak{M}_m}w'$.
\end{itemize} 

The other symbols can be interpreted similarly. 
 
 Put  $I_\mathfrak{C}(c_0, P^\star)=\{ (\textbf{w}, a_1,\dots ,a_n)\ |\ (a_1,\dots ,a_n)\in I_\mathfrak{C}(w,P)\}$, $I(c_0,\leq)=\{(m,m')\ |\  m\leq m'\}$, and $I_\mathfrak{C}(c_0,dom_{\mathfrak{M}}^\star)=\{(\bar{a},m)\ |\ \bar{a}\in D_{\mathfrak{M}_m}^\star\}$. Similar interpretation is used for the predicate $dom_\mathfrak{N}^\star$ in $c_0$.
 
Finally, let the interpretation of constant $\lambda$ be the empty sequence in $D^\star= \bigcup_{n\in \mathbb{N}} D^\star_{\mathfrak{M}_n} \cup \bigcup_{n\in \mathbb{N}}D^\star_{\mathfrak{N}_n}$.
  
\textbf{Second step,  defining \boldmath{$\sigma_{bisim}$}:} $\sigma_{bisim}$ converts the notion of $k$-bisimilarity of two $\sigma$-models into some first-order  $\sigma_{bisim}$-statements.   
 
Let $\sigma_{bisim}= \{ =,(Z,6),\ (^\frown,3),\ (\langle,\rangle,3),\ (\textit{left},2),\ (\textit{right},2)\}$.  Except for $=, Z$ the other relation symbols are intended to be interpreted as graphs of functions which are needed to encode the notion of bisimilarity. The relation $^\frown$ is reserved for concatenation function on $D_{\mathfrak{M}_n}^\star$ and $D_{\mathfrak{N}_n}^\star$ for all $n\in \mathbb{N}$  which is interpreted as 
$$ ^\frown(\bar{a},\bar{b},\bar{c}) \ \text{ if and only if } \bar{a},\bar{b},\bar{c}\in D_{\mathfrak{M}_n}^\star \ \text{ and }  \bar{c}=\bar{a}\bar{b}.$$
The other relations are implemented for the coding function. To ease the notations we interpret them as function symbols.

Let $\sigma_{Lind}=\sigma_{skeleton}\cup \sigma_{bisim}$.
Now we expand the $\sigma_{skeleton}$-model $\mathfrak{C}$ to $\sigma_{Lind}$-model $\mathfrak{C}'=(W_\mathfrak{C}, R_\mathfrak{C}, D_\mathfrak{C}, I_\mathfrak{C'},c_0)$ by interpreting the corresponding relations.
 
The interpretation of $I_{\mathfrak{C}'}(Z,c_0)$ is the set of all 6-tuples $(\textbf{w},\bar{a},\textbf{v},\bar{b},l,k)$, where $w\in W_{\mathfrak{M}_l}$, $\bar{a}\in D^\star_{\mathfrak{M}_l}$ and $v\in W_{\mathfrak{N}_l}$, $\bar{b}\in D^\star_{\mathfrak{N}_l}$ and there exit $a_1,\dots,a_n\in D_{\mathfrak{M}_l}$ and $b_1,\dots,b_n\in D_{\mathfrak{N}_l}$ such that $a_1^\frown\dots ^\frown a_n=\bar{a}$, $b_1^\frown\dots ^\frown b_n=\bar{b}$,  and $wa_1\dots a_n\rightleftarrows_k vb_1\dots b_n$.

The other symbols can be interpreted naturally in $c_0$ as defined in the preceding section (see section \ref{sec bisim}).

For the rest of possible worlds $w\in W_\mathfrak{C'}$ interpret elements of $\sigma_{bisim}$ as empty set.

\textbf{Third step, introducing \boldmath{$L[\sigma_{Lind}]$}-theory \boldmath{$T_{Lind}$}:} In this step we are going to introduce an $L[\sigma_{Lind}]$-theory to formalize the notion of bisimilarity. To proceed with this idea we introduce a sequence of $L[\sigma_{Lind}]$-sentences. Let $\mathfrak{f,g}\in\{\mathfrak{M},\mathfrak{N}\}$ and $\mathfrak{f}\neq\mathfrak{g}$.

\begin{enumerate}
\item The following sentences state that for any world from $W_\mathfrak{M}$ and $W_\mathfrak{N}$ in the frame there is a unique element from world sort that satisfies either $world_\mathfrak{M}$ or $world_\mathfrak{N}$ for some $n$ respectively, and for each element of world sort there is a corresponding world in the frame.
  \begin{enumerate}
   \item
      $\forall \underline{w}\forall n\ (\ (world_\mathfrak{M}(\underline{w},n)\leftrightarrow \lozenge world_\mathfrak{M}(\underline{w},n))\ \wedge \\
      ~ \hspace{1.5cm} (world_\mathfrak{N}(\underline{w},n)\leftrightarrow \lozenge world_\mathfrak{N}(\underline{w},n))\ ).$
      
   \item 
        $ \forall\underline{w}\forall n\ \neg(world_\mathfrak{M}(\underline{w},n) \wedge world_\mathfrak{N}(\underline{w},n)).$
   \item
    $\square\  (\ \exists\underline{w} \exists n\  ((world_\mathfrak{M}(\underline{w},n) \vee world_\mathfrak{N}(\underline{w},n)) \wedge \neg (world_\mathfrak{M}(\underline{w},n) \wedge world_\mathfrak{N}(\underline{w},n)) \\
 ~ \hspace{1.1cm} \wedge  \forall\underline{w}'\forall n' ((\underline{w}\neq \underline{w}' \vee n \neq n') \rightarrow (\neg world_\mathfrak{M}(\underline{w}',n') \wedge \neg world_\mathfrak{N}(\underline{w}',n')))).$
    \end{enumerate}

\item Any accessible world $w'$ from $w\in W_{\mathfrak{f}_n}$ satisfies $world_\mathfrak{f}(\textbf{w}',n)$  and is accessible from $c_0$.
  \begin{enumerate}
   \item
       $\forall\underline{w} \forall n \square (world_\mathfrak{f}(\underline{w},n)\rightarrow \square \exists \underline{w}' world_\mathfrak{f}(\underline{w}',n)).$
   \item
        $\forall\underline{w} \forall n \forall\underline{w}' (\lozenge(world_\mathfrak{f}(\underline{w},n)\wedge \lozenge world_\mathfrak{f}(\underline{w}',n)) \rightarrow \lozenge world_\mathfrak{f}(\underline{w}',n)).$

   \end{enumerate}
\item There is a one-to-one corresponding between the domain of $D_{\mathfrak{f}_n}$ 
and the elements from domain sort which satisfy $dom_\mathfrak{f}(-,n)$. 
Moreover $(\mathfrak{C}',c_0)$ is closed under $dom_\mathfrak{f}(-,n)$. 
     \begin{enumerate}
       \item
       $\forall n\forall x\ (dom_\mathfrak{f}(x,n) \rightarrow \square\ (\exists \underline{w}\ world_\mathfrak{f}(\underline{w},n)\rightarrow dom_\mathfrak{f}(x,n))).$
      \item
       $\forall\underline{w}\forall n\forall x\ (\lozenge (world_\mathfrak{f}(\underline{w},n)\wedge dom_\mathfrak{f}(x,n)) \rightarrow dom_\mathfrak{f}(x,n)).$
      \item
         $\forall \underline{w}\forall \underline{w}'\forall n \forall x\  (\ \square\ (world_\mathfrak{f}(\underline{w},n) \rightarrow dom_\mathfrak{f}(x,n)) \leftrightarrow\\
 ~ \hspace{2.7cm}  \square(world_\mathfrak{f}(\underline{w}',n) \rightarrow dom_\mathfrak{f}(x,n))).$       
       \item
       $\forall\underline{w}\forall n\ \square (world_\mathfrak{f}(\underline{w},n)\rightarrow \forall n' (\neg \exists x\ dom_\mathfrak{g}(x,n') \wedge (n\neq n' \rightarrow \neg\exists x\ dom_\mathfrak{f}(x,n'))).$

        \item The domain of models $\mathfrak{M}_n$ and $\mathfrak{N}_n$, for any $n\in \mathbb{N}$, is non-empty,
       
       $\forall n\ (\exists x\  dom_\mathfrak{M}(x,n) \wedge \exists x\  dom_\mathfrak{N}(x,n)).$

       \item The interpretation of any constant $c\in \sigma$ is in the domain of all $\mathfrak{M}_n$ and $\mathfrak{N}_n$, for all $n$.

   $\forall x \ (x=c\rightarrow \forall n (dom_\mathfrak{M}(x,n)\wedge dom_\mathfrak{N}(x,n))).$
     \end{enumerate}
\item For any $n\in \mathbb{N}$ the predicate $mod_\mathfrak{f}(-,n)$ 
specifies model $\mathfrak{f}_n$.

$\forall n\ \square \ ( \exists\underline{w}\  world_\mathfrak{f}(\underline{w},n)\ \rightarrow \ \forall x (dom_\mathfrak{f}(x,n)\leftrightarrow mod_\mathfrak{f}(x,n))\ \wedge\\ 
 ~ \hspace{5.2cm} \forall n' (\neg\ \exists x\ mod_\mathfrak{g}(x,n') \wedge (n\neq n' \rightarrow \neg \exists x\ mod_\mathfrak{f}(x,n')))).$

\item The atomic formula $P(a_1,\dots,a_n)$ is true in the world $w$ if and only if $P^\star(\textbf{w},a_1,\dots,a_n)$ is true in both $c_0$ and $w$.
     \begin{enumerate}
         \item
              $\forall\underline{w}\forall n\forall x_1\dots \forall x_n\ \square (world_\mathfrak{f}(\underline{w},n)\rightarrow (P(x_1,\dots,x_n) \leftrightarrow P^\star(\underline{w},x_1,\dots,x_n))).$
         \item
             $\forall\underline{w}\forall x_1\dots \forall x_n\ (P^\star(\underline{w},x_1,\dots,x_n) \rightarrow \square (\exists n\ world_\mathfrak{f}(\underline{w},n)\rightarrow P(x_1,\dots,x_n)).$
         \item
             $\forall\underline{w}\forall n\forall x_1\dots \forall x_n\ (\lozenge (world_\mathfrak{f}(\underline{w},n)\wedge P(x_1,\dots,x_n))\rightarrow P^\star(\underline{w},x_1,\dots,x_n)).$
       \end{enumerate}

\item There is a back and forth property between the relation $R_{\mathfrak{f}_n}$ and the predicate $rel_\mathfrak{f}(-,-,n)$.
    \begin{enumerate}
       \item
           $\forall\underline{w}\forall n\ \square (world_\mathfrak{f}(\underline{w},n) \rightarrow \forall \underline{w}' (rel_\mathfrak{f}(\underline{w},\underline{w}',n) \leftrightarrow \lozenge world_\mathfrak{f}(\underline{w}',n))).$
        \item
            $\forall\underline{w} \forall \underline{w}'\forall n\ (rel_\mathfrak{f}(\underline{w},\underline{w}',n) \rightarrow \square (world_\mathfrak{f}(\underline{w},n) \rightarrow rel_\mathfrak{f}(\underline{w},\underline{w}',n))).$
        \item
           $\forall\underline{w} \forall \underline{w}'\forall n\ (\lozenge (world_\mathfrak{f}(\underline{w},n) \wedge rel_\mathfrak{f}(\underline{w},\underline{w}',n))\rightarrow rel_\mathfrak{f}(\underline{w},\underline{w}',n)).$
      \end{enumerate}

\item The predicate $dom^\star_\mathfrak{f}(-,n)$ 
 indicates the elements of $D^\star_{\mathfrak{f}_n}$ and $\lambda$ is a constant which is interpreted as the empty sequence in $D^\star$.
       \begin{itemize}
          \item $\forall n \forall x_1,\dots ,\forall x_k\ (\bigwedge_{1\leq i\leq k} dom_\mathfrak{f}(x_i,n) \leftrightarrow dom^\star_\mathfrak{f}(x_1^\frown\dots^\frown x_k,n)).$
          \item $\forall x\forall y (x^\frown y=\lambda \rightarrow x=\lambda \wedge y=\lambda).$
       \end{itemize}

\item For any $n\in \mathbb{N}$, each of $\mathfrak{f}_n$ has a root.
     \begin{enumerate}
        \item 
$\forall n \exists\underline{w}\ (\lozenge root_\mathfrak{f}(\underline{w},n) \wedge \forall\underline{w}' (\underline{w}\neq\underline{w}' \rightarrow \neg \lozenge root_\mathfrak{f}(\underline{w}',n))).$
          \item
$\forall n \forall\underline{w}\ \square (root_\mathfrak{f}(\underline{w},n)\rightarrow world_\mathfrak{f}(\underline{w},n)).$
      \end{enumerate}
    
\item For each $n\in \mathbb{N}$, while the $\mathfrak{M}_n$'s root satisfies $\phi$, the $\mathfrak{N}_n$'s root refutes $\phi$.\label{disagree}
    
    $\forall n\ \forall \underline{w}\ (\square (root_\mathfrak{M}(\underline{w},n)\rightarrow\phi^{mod_\mathfrak{M}(x,n)})\ \wedge \ 
  \square  (root_\mathfrak{N}(\underline{w},n)\rightarrow\neg\phi^{mod_\mathfrak{N}(x,n)})). \footnote{Note that the model $(\mathfrak{C}',c_0)$ is closed under both $mod_\mathfrak{M}(x,n)$ and $mod_\mathfrak{N}(x,n)$. }$

\item The following $\sigma_{Lind}$-sentences state the basic properties of bisimilarity in the language $\sigma_{Lind}$ and all of them are true in $(\mathfrak{C}',c_0)$.\label{bisimcod}
\begin{enumerate}
\item 
$\forall \underline{w}, \underline{v}, \forall x_1,\dots ,x_n, y_1\dots ,y_n, \forall l,k \ ~ (~ Z(\underline{w},x_1^\frown\dots ^\frown x_n,\underline{v},y_1^\frown\dots ^\frown y_n,l,k)   \rightarrow   \\
~  \hspace{6.3cm}       ( \textit{left}(k)\leq \textit{left}(l) \wedge \\
 ~ \hspace{6.3cm}                    \textit{right}(k)\leq \textit{right}(l) \ \wedge \\
~   \hspace{6.3cm} world_\mathfrak{M}(\underline{w},l) \wedge world_\mathfrak{N}(\underline{v} ,l) \wedge \\
 ~    \hspace{6.2cm}  \bigwedge_{1\leq i\leq n} dom_\mathfrak{M}(x_i,l) \wedge \bigwedge_{1\leq i\leq n} dom_\mathfrak{N}(y_i,l)).$

\item Two bisimilar worlds $w$ and $v$ satisfy the same atomic formulas.

$\forall \underline{w},\underline{v},\forall x_1,\dots , x_n ,y_1,\dots ,y_n,\forall k,l \  (\ Z(\underline{w}, x_1^\frown\dots ^\frown x_n,\underline{v}, y_1^\frown\dots ^\frown y_n ,l,k)\ \rightarrow\\ 
 ~ \hspace{6.2cm}  (P^\star(\underline{w},x_1,\dots , x_n)\leftrightarrow P^\star(\underline{v},y_1,\dots ,y_n))). $

For all predicates $P\in\sigma$.
\item  Frame back and forth is the conjunction of the following sentences.
\begin{description}
\item[$\lozenge$-Forth.]
$ \forall \underline{w},\underline{v},\underline{w}' ,\forall x_1,\dots ,x_n, y_1\dots ,y_n,\forall l,k\\
 ~  (\ Z(\underline{w}, x_1^\frown\dots ^\frown x_n,\underline{v}, y_1^\frown\dots ^\frown y_n,l,k)\ \wedge\   rel_\mathfrak{M}(\underline{w},\underline{w}' ,l) \ \wedge\\
 ~ \hspace{.4cm} \exists s~ \textit{left}(k)=s+1 \ \ \ \rightarrow \\
  ~    \exists \underline{v}'\ (rel_\mathfrak{N}(\underline{v} , \underline{v}',l)\ \wedge\ Z(\underline{w}',x_1^\frown\dots ^\frown x_n, \underline{v}',y_1^\frown\dots ^\frown y_n,l,\langle s,\textit{right}(k)\rangle))).$

\item[$\lozenge$-Back.]
$\forall \underline{w},\underline{v},\underline{v}' ,\forall x_1,\dots ,x_n, y_1\dots ,y_n,\forall l,k\\
 ~  (\ Z(\underline{w}, x_1^\frown\dots ^\frown x_n,\underline{v}, y_1^\frown\dots ^\frown y_n,l,k)\ \wedge\  rel_\mathfrak{N}(\underline{v},\underline{v}' ,l)\ \wedge \\
 ~ \hspace{.4cm} \exists s~ \textit{left}(k)=s+1 \ \ \ \rightarrow \\
  ~  \exists \underline{w}'\ (rel_\mathfrak{M}(\underline{w} , \underline{w}',l)\ \wedge\ Z(\underline{w}',x_1^\frown\dots ^\frown x_n, \underline{v}',y_1^\frown\dots ^\frown y_n,l,\langle s,\textit{right}(k)\rangle))).$

\end{description}
\item Domain back and forth is the conjunction  of the following sentences. 
\begin{description}
\item[$\exists$-Forth.]
$\forall \underline{w} ,\underline{v},\forall x_1,\dots ,x_n,\forall y_1,\dots ,y_n,\forall x, \forall l,k \\
 ~   (\ Z(\underline{w} ,x_1^\frown\dots ^\frown x_n,\underline{v} ,y_1^\frown\dots ^\frown y_n,l,k)\ \wedge\   dom_\mathfrak{M}(x,l)\ \wedge \\
 ~ \hspace{.4cm} \exists s ~ \textit{right}(k)=s+1\ \ \  \rightarrow\\ 
  ~   \exists y\ (dom_\mathfrak{N}(y,l)\ \wedge\   Z(\underline{w},x_1^\frown\dots ^\frown x_n^\frown x,\underline{v},y_1^\frown\dots ^\frown y_n^\frown y,l,\langle \textit{left}(k),s\rangle))).$

\item[$\exists$-Back.] 
$\forall \underline{w} ,\underline{v},\forall x_1,\dots ,x_n,\forall y_1,\dots ,y_n,\forall y, \forall l,k\\
 ~ (\ Z(\underline{w} ,x_1^\frown\dots ^\frown x_n,\underline{v} ,y_1^\frown\dots ^\frown y_n,l,k)\ \wedge\  dom_\mathfrak{N}(y,l)\ \wedge\\
 ~ \hspace{.4cm} \exists s ~ \textit{right}(k)=s+1\ \ \ \  \rightarrow\\
  ~   \exists x\ (dom_\mathfrak{M}(x,l)\ \wedge \  Z(\underline{w},x_1^\frown\dots ^\frown x_n^\frown x,\underline{v},y_1^\frown\dots ^\frown y_n^\frown y,l,\langle \textit{left}(k),s\rangle))).
$
\end{description}
\end{enumerate}
\item Since for each $n\in \mathbb{N}$ $(\mathfrak{M}_n,w_n)\rightleftarrows_n(\mathfrak{N}_n,v_n)$, it follows that \label{root bisim}

$ \mathfrak{C}',c_0\models \forall n \forall \underline{w},\underline{v} \ (\lozenge root_\mathfrak{M}(\underline{w},n)\ \wedge\ \lozenge root_\mathfrak{N}(\underline{v},n)\ \rightarrow\ Z(\underline{w},\lambda,\underline{v},\lambda,n,n)).$

\item  $\sigma_{Lind}$ statements indicate the basic properties of coding that is

 $\langle,\rangle$ is a bijective function and $\textit{left}$ and $\textit{right}$ are unique functions such that $\langle \textit{left}(n),\textit{right}(n)\rangle=n$, for each $n\in\mathbb{N}$.

\item $\frown$ is a concatination function.

\item Finally, formalize the basic properties of order $\leq$ by the following sentences: \label{rootexist}

\begin{itemize}
\item 
$\mathbb{N}$ under $\leq$ is a linear order with well-defined successors and predecessors except for the first element. 
\item $\alpha_0= \forall n\ \exists \underline{w},\underline{v}\  (\lozenge root_\mathfrak{M}(\underline{w},n)\ \wedge\ \lozenge root_\mathfrak{N}(\underline{v},n))$.
\item $\alpha_1= \exists n\   1\leq n$.
\item $\alpha_2= \exists n\  (1\leq n\ \wedge\  2\leq n)$.\\
\vdots
\end{itemize}
\end{enumerate}

Having defined the above statements, let $T_{Lind}$ be the $L[\sigma_{Lind}]$-theory consisting of sentences above.

Note that any finite subset of $T_{Lind}$ is satisfiable in $(\mathfrak{C'},c_0)$. Therefore, by the compactness property, $T_{Lind}$ has a model $(\mathfrak{E},e)$. Choose  two non-standard elements $H_1$ and $H_2$ in $(\mathbb{N}^{\mathfrak{E}},\leq^{\mathfrak{E}})$ and for $H=\langle H_1,H_2\rangle$, define two models $(\hat{\mathfrak{M}}_H,\hat{w}_H)$ and $(\hat{\mathfrak{N}}_H,\hat{v}_H) $ as follows. (Subsequently, from $(\mathfrak{E},e)$ one can extract two $\sigma$-models.)

\begin{itemize}
\item $W_{\hat{\mathfrak{M}}_H}=\{w\in W_{\mathfrak{E}}\ |\ \mathfrak{E},w\models \exists \underline{w}\ world_\mathfrak{M}(\underline{w},H)\}$.
\item  $R_{\hat{\mathfrak{M}}_H}  = R_{\mathfrak{E}}\upharpoonright W_{\hat{\mathfrak{M}}_H}$.
\item $D_{\hat{\mathfrak{M}}_H}=\{a\in D_\mathfrak{E} \ | \ \mathfrak{E},w\models dom_\mathfrak{M}(a,H)\ \text{ for all } w\in W_{\hat{\mathfrak{M}}_H}\}$.
\item For each  $w\in W_{\hat{\mathfrak{M}}_H}$, and any $a_1,\dots,a_n$ in $D_{\hat{\mathfrak{M}}_H}$ and any predicate $P\in \sigma$, $I_{\hat{\mathfrak{M}}_H}(w,P)\ =\ \{\ (a_1,\dots,a_n)\ |\, \mathfrak{E},w\models P(a_1,\dots,a_n)\}$. Also, $I_{\hat{\mathfrak{M}}_H}(c)=I_\mathfrak{E}(c)$, for each constant symbol $c\in\sigma$.
\item $\hat{w}_H$ is a world in $W_{\hat{\mathfrak{M}}_H}$ such that $\mathfrak{E},\hat{w}_H\models \exists\underline{w}\ root_\mathfrak{M}(\underline{w},H)$.
\end{itemize}
Note that the existence of $\hat{w}_H$ is guaranteed by the  statement \ref{rootexist}. We may similarly define $(\hat{\mathfrak{N}}_H,\hat{v}_H)$.

Using the sentence \ref{root bisim} which is included in $T_{Lind}$ one can see that  for $(\hat{\mathfrak{M}}_H,\hat{w}_H)$ and $(\hat{\mathfrak{N}}_H,\hat{v}_H)$ the relation $Z(\hat{\textbf{w}}_H,\lambda,\hat{\textbf{v}}_H,\lambda,H,H)$ holds, $\hat{\textbf{w}}_H$ and $\hat{\textbf{v}}_H$ are those elements of $D_\mathfrak{E}$ where $\mathfrak{E},\hat{w}_H\models world_\mathfrak{M}(\hat{\textbf{w}}_H,H)$ and    $\mathfrak{E},\hat{v}_H\models world_\mathfrak{N}(\hat{\textbf{v}}_H,H)$. Also, the sentence \ref{disagree} implies that  $(\hat{\mathfrak{M}}_H,\hat{w}_H)\models\phi$, while $(\hat{\mathfrak{N}}_H,\hat{v}_H)\models\neg\phi$. 

Define a relation $Z'$ between  $(\hat{\mathfrak{M}}_H,\hat{w}_H)$ and $(\hat{\mathfrak{N}}_H,\hat{v}_H)$ as follows.
\begin{eqnarray*}
Z'= &\{&(wa_1\dots a_n,vb_1\dots b_n)\, \, |\, \, n\geq 0,\ \mathfrak{E},e_0\models Z(\textbf{w},a_1^\frown \dots ^\frown a_n,\textbf{v},b_1^\frown \dots ^\frown b_n,H,K)\\
  & & \text{ for some } k\geq 0\}.
\end{eqnarray*}

Note that by the statement \ref{root bisim} $(\hat{w}_H,\hat{v}_H)\in Z'$ so $Z'\neq\emptyset$.
The statements appearing in \ref{bisimcod} guarantee that $Z'$ is a bisimulation between $(\hat{\mathfrak{M}}_H,\hat{w}_H)$ and $(\hat{\mathfrak{N}}_H,\hat{v}_H)$.
\end{proof}

The following theorem can be seen as a strengthening of van Benthem invariance theorem (Theorem \ref{invar})  (see Corollary 3.4 in \cite{benthem:lind09} for propositional case).

\begin{coro}
Let $L$ be a compact abstract logic extending first-order modal logic. Then FML is the bisimulation invariant fragment of $L$. In particular, FML is the bisimulation invariant fragment of first-order logic. 
\end{coro}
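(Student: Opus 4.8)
The plan is to read the statement off from Theorem \ref{lind} together with its proof, rather than to argue from scratch. One direction is immediate: by Proposition \ref{bis} every FML-sentence is invariant under bisimulations, and since $L$ extends FML each FML-sentence is equivalent to some $L[\sigma]$-sentence, which therefore defines a bisimulation-closed class of pointed models; hence FML is contained in the bisimulation-invariant fragment of $L$. The content is the converse inclusion: every bisimulation-invariant $\phi\in L[\sigma]$ has an FML-equivalent.

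The key observation for the converse is that the proof of Theorem \ref{lind} is essentially a per-sentence argument. Starting from a putative $L$-sentence $\phi$ with no FML-equivalent, the construction of the auxiliary language $\sigma_{Lind}$, the theory $T_{Lind}$, its finite satisfiability in $(\mathfrak{C}',c_0)$, and the extraction via compactness of the two bisimilar pointed models $(\hat{\mathfrak{M}}_H,\hat{w}_H)\rightleftarrows(\hat{\mathfrak{N}}_H,\hat{v}_H)$ that disagree on $\phi$, use nothing about $L$ beyond: (i) countable compactness, which we have since $L$ is compact; (ii) the standing closure, renaming and relativization assumptions on abstract logics in this subsection; and (iii) invariance under bisimulations of the single sentence $\phi$, which enters exactly twice, at the step ``we may assume $\mathfrak{M},\mathfrak{N}$ are point-generated'' and at the final contradiction, where $(\hat{\mathfrak{M}}_H,\hat{w}_H)\rightleftarrows(\hat{\mathfrak{N}}_H,\hat{v}_H)$ while $(\hat{\mathfrak{M}}_H,\hat{w}_H)\models\phi$ and $(\hat{\mathfrak{N}}_H,\hat{v}_H)\models\neg\phi$. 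Since all three ingredients are available under the hypotheses of the corollary, the same construction produces the required FML-equivalent of $\phi$. Equivalently and more conceptually: the bisimulation-invariant fragment $L_b$ of $L$ is itself an abstract logic meeting the hypotheses of Theorem \ref{lind}, since it contains FML, inherits countable compactness from $L$, and is trivially invariant under bisimulations; so Theorem \ref{lind} gives $L_b\equiv\mathrm{FML}$ directly.

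The step I expect to require care is the treatment of relativization. In the ``re-run the proof'' approach, one must make sure the relativizations $\phi^{mod_\mathfrak{M}(x,n)}$ and $\phi^{mod_\mathfrak{N}(x,n)}$ appearing in sentence \ref{disagree} are genuine $L[\sigma_{Lind}]$-sentences; this is exactly where the relativization property of $L$ (a standing assumption) is invoked, and one uses that $(\mathfrak{C}',c_0)$ is closed under $mod_\mathfrak{M}(x,n)$ and $mod_\mathfrak{N}(x,n)$ so that these relativizations behave as intended at every world where they are evaluated in the construction. In the $L_b$-framing, the corresponding obstacle is to check that $L_b$ is closed under relativization: for this one verifies that the restriction of a bisimulation between two models closed under a predicate $P(x,\bar{c})$ to the induced substructures $\mathfrak{M}^P$ and $\mathfrak{N}^P$ of Definition \ref{relativ} is again a bisimulation — here Proposition \ref{bis} ensures bisimilar pointed models agree on the FML-sentence $\exists x\,P(x,\bar{c})$ (so worlds and witnesses stay inside $W^P$ and $D^P$), and the closure-under-$P$ hypothesis keeps chosen witnesses in $D^P$ — so that $\phi^P$ is bisimulation-invariant over precisely the class of models on which Definition \ref{relativ} constrains it.

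Finally, the ``in particular'' clause follows by specialization: two-sorted first-order logic over the structures $\mathfrak{M}^\ast$, into which FML embeds via the standard translation $ST$, is compact and extends FML, so applying the first part of the corollary to it shows that FML is its bisimulation-invariant fragment; this recovers the Invariance Theorem \ref{invar} as a special case.
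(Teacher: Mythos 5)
Your proposal is correct and follows the paper's intended derivation: the paper states this corollary without a separate proof precisely because, as you observe, the proof of Theorem \ref{lind} only ever uses bisimulation invariance of the single sentence $\phi$ (at the point-generated reduction and at the final contradiction), so it already yields that every bisimulation-invariant $L$-sentence has an FML equivalent, with the ``in particular'' clause obtained by specializing to two-sorted first-order logic over the correspondence structures. Your alternative $L_b$-framing needs the extra care you flag (the $L$-relativization of a bisimulation-invariant sentence is only constrained, hence only provably invariant, on constant-domain models closed under $P$, so membership of $\phi^P$ in $L_b$ is delicate), but your primary per-sentence argument is exactly the paper's route and stands on its own.
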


\subsection{Some Other Lindstr\"om Theorem}\label{other lind}

The method of proving theorem \ref{lind} could not be applied for varying domain Kripke models to give a general Lindstr\"om theorem for first-order modal logic. So we have to look for some other versions of the Lindstr\"om theorem which can be used to give a characterization theorem related to varying domain Kripke models.

The other versions of the Lindstr\"om theorem, for example Theorem 2.2.1 in \cite{flum:char85} and Theorem 6 in \cite{otto:lind08} use a variant of the Tarski union property. Here we extend this results to give a characterizing theorem for FML based on the Tarski union property. To be more precise,  we extend the method used by Piro and Otto in \cite{otto:lind08} for proving the Lindstr\"om theorem for modal logic with global modality. We assume that an abstract logic has the closure properties of Definition \ref{abstract2}.

First of all, note that FML has the Tarski union property (TUP). 
Let $\langle I,\leq\rangle$ be a linear order and $\langle \mathfrak{M}_i: i\in I\rangle$ be a chain of varying domain Kripke models;  $\mathfrak{M}_i\subseteq \mathfrak{M}_j$ for all $i,j\in I$ with $i\leq j$. 
The union of chain $\langle \mathfrak{M}_i: i\in I\rangle$
 abbreviated by $\mathfrak{M}_\infty$,
is a model $\bigcup_{i\in I}\mathfrak{M}_i =(\bigcup_{i\in I} W_i, \bigcup_{i\in I} R_i, \bigcup_{i\in I} D_i, \bigcup_{i\in I} I_i, \{\bigcup_{i\in I} D_i(w)\})$. 
Since $M_i\subseteq M_j$, the above definition is well defined. 
 
 We say that $\langle \mathfrak{M}_i: i\in I\rangle$ is an elementary chain when for each $i,j\in I$ with $i\leq j$, $\mathfrak{M}_i\preceq\mathfrak{M}_j$.
 
The following proposition is routine and is shown by an easy induction on formulas.

\begin{prop}\label{tup}
For any elementary chain of Kripke models $\langle \mathfrak{M}_i: i\in I\rangle$, $\mathfrak{M}_\infty$ is the elementary extension of each $\mathfrak{M}_i$.
\end{prop}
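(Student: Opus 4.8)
The plan is to prove the statement by a straightforward induction on the complexity of first-order modal $\tau$-formulas, parallel to the classical Tarski--Vaught style argument for elementary chains in ordinary model theory, but carried out simultaneously in the ``world'' and ``object'' coordinates. First I would fix the elementary chain $\langle \mathfrak{M}_i : i\in I\rangle$ and recall from the setup that $\mathfrak{M}_\infty = \bigcup_{i\in I}\mathfrak{M}_i$ is a well-defined varying domain Kripke model with $W_\infty = \bigcup_i W_i$, $R_\infty = \bigcup_i R_i$, $D_\infty(w) = \bigcup_{i : w\in W_i} D_i(w)$, and interpretations of predicates and constants obtained as the corresponding unions (the submodel clause iii.\ guarantees coherence). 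The goal is to show that for every $i\in I$, every $w\in W_i$, every tuple $\bar a$ from $D_i$, and every formula $\phi(\bar x)$,
\[
\mathfrak{M}_i, w\models\phi(\bar a)\ \iff\ \mathfrak{M}_\infty, w\models\phi(\bar a).
\]
Since $\mathfrak{M}_i\preceq\mathfrak{M}_j$ for $i\le j$, it suffices (after fixing $i$ and replacing $\mathfrak{M}_i$ by any later model containing the relevant data) to prove this claim by induction on $\phi$.

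The atomic and Boolean cases are immediate from the definition of $\mathfrak{M}_\infty$ as a union (here I use submodel clause iii.\ to see that predicate interpretations and constant interpretations are literally inherited, so atomic truth is absolute). The two cases that carry content are $\exists x\,\phi$ and $\lozenge\phi$. For $\exists x\,\psi(\bar x, x)$: if $\mathfrak{M}_i, w\models\exists x\,\psi(\bar a, x)$ then a witness $c\in D_i(w)\subseteq D_\infty(w)$ works, and the induction hypothesis transfers $\mathfrak{M}_i,w\models\psi(\bar a, c)$ to $\mathfrak{M}_\infty$; conversely, if $\mathfrak{M}_\infty, w\models\exists x\,\psi(\bar a, x)$ with witness $c\in D_\infty(w)$, then $c\in D_j(w)$ for some $j\ge i$ (using that $I$ is directed, or just linear, and that $w\in W_i\subseteq W_j$), and since $\langle\mathfrak{M}_k\rangle$ is an elementary chain we have $\mathfrak{M}_i\preceq\mathfrak{M}_j$, so we may work inside $\mathfrak{M}_j$: by the induction hypothesis $\mathfrak{M}_j, w\models\psi(\bar a, c)$, hence $\mathfrak{M}_j, w\models\exists x\,\psi(\bar a, x)$, hence $\mathfrak{M}_i, w\models\exists x\,\psi(\bar a, x)$ by elementarity. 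The $\lozenge\psi$ case is entirely analogous with worlds in place of objects: a successor $w'$ of $w$ in $\mathfrak{M}_\infty$ lies in some $W_j$ with $j\ge i$ and with $w R_j w'$ (since $R_\infty = \bigcup R_k$ and $w\in W_i$), and elementarity of $\mathfrak{M}_i\preceq\mathfrak{M}_j$ together with the induction hypothesis pushes $\mathfrak{M}_j, w'\models\psi(\bar a)$ down to $\mathfrak{M}_i, w\models\lozenge\psi(\bar a)$.

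The main subtlety to be careful about — not a deep obstacle, but the point where one must invoke the hypotheses precisely — is the ``downward'' direction of the existential and diamond steps, where one produces a witness living at some finite stage $j$ of the chain and then must appeal to the elementary chain hypothesis $\mathfrak{M}_i\preceq\mathfrak{M}_j$ rather than merely to the induction hypothesis; this is exactly what fails for plain chains and is the reason the statement needs ``elementary''. One should also note that because $I$ is only assumed to be a linear order, every finite collection of the data (the world $w$, the tuple $\bar a$, a witness $c$ or successor $w'$) appears together in a single $\mathfrak{M}_j$, which is all that is used. With these observations the induction goes through and yields $\mathfrak{M}_i\preceq\mathfrak{M}_\infty$ for every $i$, which is the assertion of Proposition~\ref{tup}.
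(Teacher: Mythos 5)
Your proof is correct and follows exactly the route the paper intends: the paper dismisses Proposition~\ref{tup} as a routine induction on the complexity of formulas, and your Tarski--Vaught style induction, with the elementary chain hypothesis invoked precisely in the downward $\exists$ and $\lozenge$ steps, is that argument carried out in full. Nothing to correct.
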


Note that by the same argument used in Proposition 2.15 \cite{blac:modal01}, one may prove that first-order modal logic has the tree model property, that is any satisfiable first-order modal theory is also  satisfiable in a tree-like model. In fact, any pointed Kripke model $(\mathfrak{M},w)$ is bisimilar to a model $(\mathfrak{N},v_0)$ whose frame $(W_\mathfrak{N},R_\mathfrak{N})$ is a tree with a countable depth that is for every world $v\in W_\mathfrak{N}$ there is a finite path from the root $v_0$ to $v$. 

Let $Th_L(\mathfrak{M},w)$ denote the theory of $(\mathfrak{M},w)$ in $L$. We omit the subscript $L$ if the logic $L$ is FML.

Suppose $\mathfrak{M}$ and $\mathfrak{N}$ are two $\sigma$-models. $\mathfrak{N}$ is an $L$-elementary extension of $\mathfrak{M}$, denoted by $\mathfrak{M} \preceq_L \mathfrak{N}$, if $\mathfrak{M}$ is a submodel of $\mathfrak{N}$ and for any $w\in W_\mathfrak{M}$ and any finite set $A=\{a_1,\dots,a_n\}\subseteq D_\mathfrak{M}$, $\mathfrak{M}_A,w\models \phi$ if and only if $\mathfrak{N}_A,w\models\phi$ for any $\sigma\cup\{c_1,\dots,c_n\}$-sentence $\phi$ in $L$.

\begin{prop}\label{real}
Let $L$ be a compact abstract logic extending FML. Then for every tree-like model $(\mathfrak{M},w_0)$ whose frame has a countable depth  there is an $L$-elementary extension $(\mathfrak{N},v_0)$ which realizes all types of $(\mathfrak{M},w)$, for all $w$ in $\mathfrak{M}$.
\end{prop}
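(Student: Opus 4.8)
\emph{Overview and the world-naming device.} The plan is to realize all the required $\exists$- and $\lozenge$-types and to secure $L$-elementarity in one stroke, by a single compactness argument carried out inside $L$; the only real departure from the classical Lindström construction is a gadget that lets the ``world-blind'' logic $L$ refer to a fixed world of the tree $\mathfrak{M}$. First I would pass to an expansion $\sigma'$ of $\sigma$ with a constant $c_a$ for each $a\in D_\mathfrak{M}$ and a propositional letter $Q_w$ for each $w\in W_\mathfrak{M}$, and expand $(\mathfrak{M},w_0)$ to $(\mathfrak{M}',w_0)$ by setting $c_a^{\mathfrak{M}'}=a$ and making $Q_w$ hold at exactly the world $w$. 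Since $\mathfrak{F}_\mathfrak{M}$ is a tree of countable depth, every $w$ has a finite depth $d(w)$, and in a tree the worlds reachable from $w_0$ by a path of length $d(w)$ are precisely those of depth $d(w)$; hence for every $\sigma'$-sentence $\psi$,
\[
\mathfrak{M}',w\models\psi\quad\Longleftrightarrow\quad\mathfrak{M}',w_0\models\square^{d(w)}(Q_w\rightarrow\psi).
\]
This equivalence, reading truth at an arbitrary world off truth at the root, is the mechanism on which everything rests.

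\emph{The theory.} Next I would let $T$ be the $L$-theory, in the language $\sigma'$ augmented by fresh constants and propositional letters, consisting of: (i) the complete $L$-diagram $Th_L(\mathfrak{M}',w_0)$; (ii) for every $w\in W_\mathfrak{M}$, every finite $A\subseteq D_\mathfrak{M}$ and every $\exists$-type $\Gamma(\bar x)$ of $(\mathfrak{M}_A,w)$ (cf.\ Definition~\ref{type}), fresh constants $\bar d_\Gamma$ together with $\square^{d(w)}\bigl(Q_w\rightarrow(\bigwedge_i\exists y\,(y=d_{\Gamma,i})\wedge\phi(\bar d_\Gamma))\bigr)$ for each $\phi\in\Gamma$, the parameters from $A$ being written by the $c_a$'s; and (iii) for every $\lozenge$-type $\Delta(\bar x)$ of $(\mathfrak{M}_A,w)$ with respect to a tuple $\bar a$, a fresh propositional letter $Q_\Delta$ together with $\square^{d(w)}(Q_w\rightarrow\lozenge Q_\Delta)$ and $\square^{d(w)+1}(Q_\Delta\rightarrow\delta(\bar a))$ for each $\delta\in\Delta$ --- the two-layer encoding forcing one and the same successor of $w$ to satisfy the whole of $\Delta$. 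Every displayed sentence lies in $L$ by the closure properties of Definition~\ref{abstract2}. The key point is that each finite fragment of $T$ is satisfiable in an expansion of $(\mathfrak{M}',w_0)$: the sentences of $Th_L(\mathfrak{M}',w_0)$ hold there, and by the very definitions of $\exists$- and $\lozenge$-type one can interpret the finitely many fresh constants by witnesses of the finitely many conjunctions involved and each fresh $Q_\Delta$ by a suitable successor of the relevant $w$. So by compactness of $L$ there is $(\mathfrak{N}',v_0)\models T$.

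\emph{Reading off the extension.} Put $\mathfrak{N}=\mathfrak{N}'\!\upharpoonright\!\sigma$. From the diagram sentences I would build, by recursion along the tree $\mathfrak{F}_\mathfrak{M}$, an injection $g:W_\mathfrak{M}\to W_{\mathfrak{N}'}$ with $g(w_0)=v_0$, $\mathfrak{N}',g(w)\models Q_w$, with $g(w)$ reachable from $v_0$ by a path of length $d(w)$, and with $wR_\mathfrak{M}v\Leftrightarrow g(w)R_{\mathfrak{N}'}g(v)$: the needed facts --- a $Q_w$-world has a successor satisfying $Q_v$ exactly when $wR_\mathfrak{M}v$, no world satisfies two distinct $Q$'s, and atomic facts as well as membership in $D(w)$ (read off $\exists y\,(y=c_a)$) transfer --- are all among $Th_L(\mathfrak{M}',w_0)$, in the shapes $\square^{d(w)}(Q_w\rightarrow\cdots)$ and $\square^{k}\neg(Q_w\wedge Q_v)$. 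Together with $a\mapsto c_a^{\mathfrak{N}'}$ this embeds $\mathfrak{M}$ into $\mathfrak{N}$ as a submodel, so after renaming elements I may assume $\mathfrak{M}\subseteq\mathfrak{N}$ and $v_0=w_0$. For $L$-elementarity: given $w\in W_\mathfrak{M}$, finite $A\subseteq D_\mathfrak{M}$ and an $L$-sentence $\phi$ over $\sigma\cup\{c_a:a\in A\}$, exactly one of $\square^{d(w)}(Q_w\rightarrow\phi)$ and $\square^{d(w)}(Q_w\rightarrow\neg\phi)$ belongs to $Th_L(\mathfrak{M}',w_0)$ (as $Q_w$ is a nominal in $\mathfrak{M}'$), and whichever one it is transfers to $\mathfrak{N}'$ and, evaluated along the length-$d(w)$ path to $w$, pins down whether $\mathfrak{N},w\models\phi$; thus $\mathfrak{M}\preceq_L\mathfrak{N}$. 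Finally, items (ii) and (iii) of $T$ say exactly that every $\exists$-type, resp.\ $\lozenge$-type, of $(\mathfrak{M},w)$ is realized in $(\mathfrak{N},w)$ in the sense of Definition~\ref{saturat}.

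\emph{Where the difficulty lies.} The whole argument turns on the world-blindness of $L$, and the nominal-plus-depth gadget is precisely the place where the hypotheses ``tree-like'' and ``countable depth'' are indispensable. The other delicate points are the two-layer encoding in (iii), so that a single successor witnesses the entire $\lozenge$-type; verifying that the finitely many requirements of any finite fragment of $T$ can be met simultaneously in $\mathfrak{M}'$; and making the modal diagram rich enough that $\mathfrak{M}$ embeds as an honest submodel of $\mathfrak{N}$ rather than merely elementarily.
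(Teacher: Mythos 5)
Your proposal is correct and is essentially the paper's own argument: expand the language with a marker naming each world of $\mathfrak{M}$, constants for all elements of $D_\mathfrak{M}$, and fresh witnesses for the $\exists$- and $\lozenge$-types, write depth-prefixed box sentences (using that in a tree of countable depth each $w$ sits at a finite, unique depth) that transfer $Th_L(\mathfrak{M},w)$ and force type realization, check finite satisfiability in an expansion of $(\mathfrak{M},w_0)$ itself, and apply compactness to read off the $L$-elementary extension. The only differences are cosmetic: you use propositional letters $Q_w$, Henkin constants with $\exists y\,(y=d)$ for $\exists$-type witnesses, and a two-layer box with $Q_\Delta$ for $\lozenge$-types, where the paper uses unary marker predicates via $\forall x\,P_w(x)$ and witness predicates $P_{w,\Gamma}$ (its sentences 6--9) to the same effect.
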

\begin{proof}
Let 
\begin{eqnarray*}
\sigma' := \sigma  &\cup & \{(P_w,1) \mid w\in W_\mathfrak{M}\}\\
                   & \cup & \{(P_{w,\Gamma},1) \mid w\in W_\mathfrak{M} \text{ and } \Gamma \text{ is a modal-type of } \mathfrak{M}\}\\
                 & \cup & \{(P_{w,\Gamma},n)\mid w\in W_\mathfrak{M} \text{ and } \Gamma \text{ is an } \exists\text{-type of } \mathfrak{M} \text{ with } n \text{ variable } \}\\
                 &\cup & \{c_d\ |\ d\in D_\mathfrak{M}\}.
\end{eqnarray*}

Since $(W_\mathfrak{M},R_\mathfrak{M})$ is a countable depth tree, for every $w\in W_\mathfrak{M}$ there is a unique finite path from $w_0$ (the root) to $w$ with the depth $n_w$.  
Now for any $w,w'\in W$ and any $n\in \mathbb{N}$ consider the following sentences:
\begin{enumerate}
\item $\lozenge^{n_w}\forall x P_{w}(x)$. \label{ele1}
\item $\square^{n} (\forall x P_w(x)\longrightarrow\neg\forall x P_{w^\prime}(x))$, if $w'\neq w$.
\item $\square^{n} (\forall x P_w(x)\longrightarrow\lozenge\forall x P_{w^\prime}(x))$, if $(w,w^\prime)\in R$.
\item $\square^{n} (\forall x P_w(x)\longrightarrow\neg\lozenge \forall x P_{w^\prime}(x))$, if $(w,w^\prime)\not\in R$.
\item $\square^{n}(\forall x P_w(x)\longrightarrow\xi)$, for all $\sigma\cup\{c_d\ | d\in D_\mathfrak{M}\}$-sentence $\xi\in Th_{L}(\mathfrak{M},w)$. \label{ele5}
\item $\square^{n} (\forall x P_w(x)\longrightarrow\lozenge \forall x P_{w,\Gamma}(x))$, for all modal-type $\Gamma$ of $(\mathfrak{M},w)$.\label{typ6}
\item $\square^{n} (\forall x P_w(x)\longrightarrow\exists \bar{y} P_{w,\Gamma}(\bar{y}))$, for all $\exists$-type $\Gamma(\bar{y})$ of $(\mathfrak{M},w)$.
\item $\square^{n} \forall \bar{y} (P_{w,\Gamma}(\bar{y})\longrightarrow\phi(\bar{y}))$, for all $\phi(\bar{y})\in\Gamma$ ($\exists$-type) and $P_{w,\Gamma}\in\sigma'$.
\item $\square^{n} \square(\forall x P_{w,\Gamma}(x)\longrightarrow\phi(c_{a_1},\dots,c_{a_n}))$ for all $\phi(c_{a_1},\dots,c_{a_n})\in\Gamma$ (modal-type) and $P_{w,\Gamma}\in\sigma'$.\label{typ9}
\end{enumerate}

One can easily see that whenever a tree-like model $\mathfrak{N}$ satisfies axioms \ref{ele1}-\ref{ele5} for any $w\in W_\mathfrak{M}$, then $\mathfrak{M}$ can be elementary embedded in $\mathfrak{N}$.
Furthermore, if $\mathfrak{N}$ also satisfies axioms \ref{typ6}-\ref{typ9}, it realizes all types of $\mathfrak{M}$.

 Let $T'$ be a $\sigma'$-theory containing all of the above sentences for each $w\in W$.
Any modal-type and respectively $\exists$-type are finitely satisfiable in $\mathfrak{M}$. Also, 
$T'$ is finitely satisfiable in (a $\sigma'$-expansion of) $(\mathfrak{M},w_0)$. Hence,  by the compactness of $L$, $T'$ is satisfiable. Let $(\mathfrak{N},v)\models T'$. Since $L$ is invariant under bisimulation, we can consider $\mathfrak{N}$ as a model whose frame is a tree with a countable depth. 
 Therefore, $\mathfrak{N}$ is an $L$-elementary extension of $\mathfrak{M}$ and $\mathfrak{N}$ realizes all types of  $\mathfrak{M}$.
\end{proof}

\begin{prop}\label{sat}
Let $L$ be an abstract compact logic extending FML with the Tarski union property. Suppose that $L$ is invariant under bisimulations. Then every tree-like model $\mathfrak{M}$ whose frame has a countable depth has a modally-saturated  $L$-elementary extension.
\end{prop}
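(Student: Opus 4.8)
The plan is to mimic the classical construction of an $\omega$-saturated elementary extension by an elementary chain of length $\omega$, using Proposition \ref{real} for the successor step and the Tarski union property for the limit step.

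First I would build recursively a chain $\mathfrak{M}_0 \preceq_L \mathfrak{M}_1 \preceq_L \mathfrak{M}_2 \preceq_L \cdots$ of tree-like models of countable depth, starting from $\mathfrak{M}_0 := \mathfrak{M}$. Given $\mathfrak{M}_n$ (tree-like, countable depth), apply Proposition \ref{real} to $(\mathfrak{M}_n, w_0^n)$ to obtain an $L$-elementary extension $\mathfrak{M}_{n+1}$ that is again tree-like of countable depth (the proof of Proposition \ref{real} produces such a model, using the tree model property and bisimulation invariance of $L$) and that realizes every type of $(\mathfrak{M}_n, w)$ for every $w \in W_{\mathfrak{M}_n}$ and every finite $A \subseteq D_{\mathfrak{M}_n}$. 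Since each $\mathfrak{M}_{n+1}$ is again tree-like of countable depth, the recursion may be continued through all $n \in \mathbb{N}$.

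Next, set $\mathfrak{M}_\infty := \bigcup_{n \in \mathbb{N}} \mathfrak{M}_n$. The chain is $L$-elementary, so by the Tarski union property $\mathfrak{M}_n \preceq_L \mathfrak{M}_\infty$ for all $n$; in particular $\mathfrak{M} \preceq_L \mathfrak{M}_\infty$, so $\mathfrak{M}_\infty$ is an $L$-elementary extension of $\mathfrak{M}$. (Being an increasing union of trees sharing the same root, $\mathfrak{M}_\infty$ is still tree-like, and each of its worlds has the same finite depth it had at the stage where it first appeared, so $\mathfrak{M}_\infty$ again has countable depth; this is not needed for the conclusion but is worth recording.) It remains to check that $\mathfrak{M}_\infty$ is m-saturated. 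Fix $w \in W_{\mathfrak{M}_\infty}$, a finite $A \subseteq D_{\mathfrak{M}_\infty}$, and a type $\Gamma(\bar{x})$ of $((\mathfrak{M}_\infty)_A, w)$. As $A$ is finite and $w$ lies in some $W_{\mathfrak{M}_n}$, choose $n$ large enough that $A \subseteq D_{\mathfrak{M}_n}$ and $w \in W_{\mathfrak{M}_n}$. The defining condition of a type, that $\mathfrak{M}_A, w \models \exists \bar{x} \bigwedge \Gamma_0$ (resp. $\mathfrak{M}_A, w \models \lozenge \bigwedge \Gamma_0(\bar{a})$) for every finite $\Gamma_0 \subseteq \Gamma$, is expressed by a single FML-sentence over $\tau_A$; since $\mathfrak{M}_n \preceq_L \mathfrak{M}_\infty$ and $L$ extends FML, it follows that $\Gamma$ is already a type of $((\mathfrak{M}_n)_A, w)$. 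By construction $\mathfrak{M}_{n+1}$ realizes it, by some $\bar{a} \in D_{\mathfrak{M}_{n+1}}(w)$ (for an $\exists$-type) or some $w'$ with $wR_{\mathfrak{M}_{n+1}}w'$ (for a $\lozenge$-type); and since $\mathfrak{M}_{n+1} \preceq_L \mathfrak{M}_\infty$ and the witness (resp. the successor $w'$, together with its relevant domain elements) already belongs to $\mathfrak{M}_\infty$, this realization persists in $\mathfrak{M}_\infty$ by upward preservation of FML-formulas along $\preceq_L$. Hence every type over $\mathfrak{M}_\infty$ is realized in $\mathfrak{M}_\infty$, i.e. $\mathfrak{M}_\infty$ is modally-saturated.

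The main obstacle I expect is bookkeeping rather than conceptual: one must ensure that the successor step genuinely preserves ``tree-like of countable depth'' so that Proposition \ref{real} can be reapplied at every stage — this is exactly where bisimulation invariance and the tree model property enter — and one must be careful that the ``elementary chain'' invoked in the Tarski union property for $L$ is precisely the $L$-elementary chain $\langle \mathfrak{M}_n \rangle_{n\in\mathbb{N}}$ we construct, which reduces to checking that $\preceq_L$ is transitive along $\omega$. A secondary point, handled above, is that a type of $((\mathfrak{M}_\infty)_A, w)$ restricts to a type of $((\mathfrak{M}_n)_A, w)$ once $n$ is large; this uses only $\mathfrak{M}_n \preceq_L \mathfrak{M}_\infty$ and the fact that ``being a type'' is a countable family of single-FML-sentence conditions in the finite expansion $\tau_A$.
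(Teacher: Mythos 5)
Your proposal is correct and follows essentially the same route as the paper: an $\omega$-chain built by iterating Proposition \ref{real}, whose union is glued by the Tarski union property, with modal saturation obtained because every type appears at some finite stage and is realized at the next. You in fact spell out the downward transfer of a type of the union to a finite stage and the upward persistence of its witnesses more explicitly than the paper's own proof does.
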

\begin{proof}
We build an $L$-elementary chain $\langle\mathfrak{N}_i: i\in\mathbb{N}\rangle$ as follows.
Let $\mathfrak{N}_0=\mathfrak{M}$. For all $i\geq 1$ let $\mathfrak{N}_{i+1}$ be a tree-like $L$-elementary extension of $\mathfrak{N}_i$ that realizes every type of $\mathfrak{N}_i$. By Proposition \ref{real} and bisimulation invariance this model exists. Let $\mathfrak{N}=\bigcup_{i\in\mathbb{N}}\mathfrak{N}_i$. Since $L$ has TUP, $\mathfrak{N}$ is an $L$-elementary extension of each $\mathfrak{N}_i$. On the other hand, $\mathfrak{N}$ is modally-saturated, since if $\Gamma$ is any type of $(\mathfrak{M},w)$, for some $w\in W_\mathfrak{M}$ there is some $i\in\mathbb{N}$ such that $\Gamma$ is a type of $\mathfrak{N}_i$ and so realized in $\mathfrak{N}_{i+1}$. 
\end{proof}

By the same argument used in \cite{otto:lind08} (Proposition 14) the following proposition is established:

\begin{prop}\label{po}
If $L$ is a compact logic extending FML and there is a sentence $\phi\in L$ not equivalent to any $\xi$ in FML, then there are two models $(\mathfrak{M},w)$ and $(\mathfrak{N},v)$ such that $Th(\mathfrak{M},w)=Th(\mathfrak{N},v)$  but they disagree on $\phi$.
\end{prop}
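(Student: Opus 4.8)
The plan is to mimic the standard argument from Otto--Piro \cite{otto:lind08} that converts a failure of definability into a pair of models with the same $L$-theory that disagree on $\phi$. Assume $\phi\in L[\sigma]$ is not equivalent to any FML-formula; by the Occurrence and Expansion properties we may take $\sigma=\sigma_\phi$ finite. The first step is to show that the class $\mathrm{Mod}(\phi)$ of pointed $\sigma$-structures satisfying $\phi$ is not, for any pair $(l,r)$, a union of $\equiv_{(l,r)}$-classes of models; for if it were, then since (by Proposition \ref{ckf}, using finiteness of $\sigma$) there are only finitely many formulas of degree at most $(l,r)$ up to equivalence, $\phi$ would be equivalent to a finite boolean combination of such formulas, hence to an FML-formula, contradicting our assumption. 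Therefore for every $k\in\mathbb{N}$ there exist pointed models $(\mathfrak{M}_k,w_k)\models\phi$ and $(\mathfrak{N}_k,v_k)\models\neg\phi$ with $(\mathfrak{M}_k,w_k)\equiv_{(l_k,r_k)}(\mathfrak{N}_k,v_k)$ where $\langle l_k,r_k\rangle=k$, and by Proposition \ref{ckf} these are in fact $k$-bisimilar. By bisimulation invariance (which $L$ enjoys in the presence of the other hypotheses, since here we are in the setting where $L$ extends FML and the preceding propositions already assumed invariance) we may replace each pair by point-generated, indeed tree-like, models.

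The second step is a compactness argument in a suitably expanded language to glue the sequence into a single pair. Introduce a new unary predicate $P$ (marking a ``copy'' of a $\sigma$-model), constants $c_d$ for the relevant parameters, together with the machinery for encoding $(\mathbb{N},\leq)$ and the $k$-bisimilarity relation $Z_{(i,j)}$, exactly as in the proof of Theorem \ref{lind}; then form the $L[\sigma']$-theory $T$ whose finite subsets are all satisfied in the disjoint-union structure carrying all the $(\mathfrak{M}_k,w_k)$ and $(\mathfrak{N}_k,v_k)$. Among the sentences of $T$ we include, for each FML-sentence $\xi$ (and each of its $\square$-prefixings down to the relevant depth), the statement that the two distinguished roots agree on $\xi$ — this is legitimate because $(\mathfrak{M}_k,w_k)$ and $(\mathfrak{N}_k,v_k)$ being $k$-bisimilar, they agree on all FML-sentences of degree at most $k$ by Proposition \ref{fbisim}, so every finite such demand is met for $k$ large enough. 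Include also a sentence forcing the two roots to disagree on $\phi$. By the countable compactness of $L$, $T$ has a model $(\mathfrak{E},e)$, and choosing a nonstandard index $H$ in $(\mathbb{N}^{\mathfrak{E}},\leq^{\mathfrak{E}})$ we extract two pointed $\sigma$-models $(\mathfrak{M},w):=(\hat{\mathfrak{M}}_H,\hat w_H)$ and $(\mathfrak{N},v):=(\hat{\mathfrak{N}}_H,\hat v_H)$ in the manner of Theorem \ref{lind}'s proof.

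The final step is to read off the two conclusions. The disagreement clause gives $(\mathfrak{M},w)\models\phi$ and $(\mathfrak{N},v)\models\neg\phi$ directly. For $Th(\mathfrak{M},w)=Th(\mathfrak{N},v)$: given any FML-sentence $\xi$, its degree $\delta(\xi)$ is a fixed standard pair, so the axiom scheme asserting root-agreement on $\xi$ at all standard index levels, together with the fact that $H$ lies above every standard level, forces $\mathfrak{M},w\models\xi$ iff $\mathfrak{N},v\models\xi$; alternatively, one checks via the $Z_{(i,j)}$-encoding that $(\mathfrak{M},w)$ and $(\mathfrak{N},v)$ are $H$-bisimilar and hence, $H$ being nonstandard, genuinely bisimilar, whence Proposition \ref{bis} gives agreement on all FML-formulas. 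I expect the main obstacle to be the bookkeeping in the expanded language — ensuring the translation of $\phi$ relative to $P$ (via the relativization property) behaves correctly and that the encoding of $\equiv_{(l,r)}$/$k$-bisimilarity transfers faithfully to the nonstandard index; but since this is precisely the apparatus already built in the proof of Theorem \ref{lind}, the present proposition follows by the same argument with only minor simplifications, which is why it can be quoted from \cite{otto:lind08}.
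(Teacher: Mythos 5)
Your plan does not prove the proposition as stated, because it leans on hypotheses and machinery that are not available here. Proposition \ref{po} assumes only that $L$ is compact and extends FML; it is stated in subsection \ref{other lind}, where structures are pointed \emph{varying domain} Kripke models and the abstract logic is only assumed to have the closure properties of Definition \ref{abstract2} — no renaming, no relativization, and no bisimulation invariance. Your gluing step imports wholesale the $\sigma_{Lind}$ apparatus of Theorem \ref{lind}: it needs the relativization property (both for $\phi$ and for every FML-sentence $\xi$ whose root-agreement you want to axiomatize), and relativization is defined in the paper (Definition \ref{relativ}) only for constant domain models — indeed the whole point of this subsection is that the method of Theorem \ref{lind} does not transfer to varying domains. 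You also invoke bisimulation invariance of $L$ ("which $L$ enjoys in the presence of the other hypotheses") — it is not among the hypotheses of the proposition, so even if the construction went through you would have proved a weaker statement. Finally, the route is a detour: in the constant-domain setting with all those extra assumptions, your argument just reproduces the proof of Theorem \ref{lind} up to the point where two bisimilar models disagree on $\phi$, which is not what is needed for Theorem \ref{lind-tarski}.

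The intended proof (the paper simply cites Proposition 14 of \cite{otto:lind08}) is a short double compactness argument that needs none of this. Let $\sigma=\sigma_\phi$ and let $C=\{\xi\in \mathrm{FML}[\sigma] \mid \text{every model of }\phi\text{ satisfies }\xi\}$. If $C\cup\{\neg\phi\}$ were unsatisfiable, compactness of $L$ (the set consists of $L$-sentences, as $L\supseteq\mathrm{FML}$ and $L$ is closed under negation) would give a finite $\Xi\subseteq C$ with $\bigwedge\Xi\models\phi$; since also $\phi\models\bigwedge\Xi$, the sentence $\phi$ would be equivalent to an FML-sentence, a contradiction. So pick $(\mathfrak{N},v)\models C\cup\{\neg\phi\}$. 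Next, $Th(\mathfrak{N},v)\cup\{\phi\}$ is satisfiable: otherwise compactness yields $\chi_1,\dots,\chi_m\in Th(\mathfrak{N},v)$ with $\phi\models\neg(\chi_1\wedge\dots\wedge\chi_m)$, so $\neg(\chi_1\wedge\dots\wedge\chi_m)\in C$ and hence holds at $(\mathfrak{N},v)$, a contradiction. Pick $(\mathfrak{M},w)\models Th(\mathfrak{N},v)\cup\{\phi\}$; since $Th(\mathfrak{N},v)$ is complete over $\sigma$, $Th(\mathfrak{M},w)=Th(\mathfrak{N},v)$ while the two models disagree on $\phi$. This uses only countable compactness (note $\mathrm{FML}[\sigma]$ is countable since $\sigma_\phi$ is finite) and closure under boolean connectives, and it works verbatim for varying domain models — which is exactly why it, rather than the Theorem \ref{lind} encoding, is the right tool here.
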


\begin{theo}\label{lind-tarski}
Any compact and bisimulation invariant logic $L\supseteq \text{FML}$ is equivalent to FML if and only if it satisfies the TUP. 
\end{theo}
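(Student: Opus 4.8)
# Proof Proposal for Theorem \ref{lind-tarski}

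The plan is to show both directions, with the forward (harder) direction following the Otto--Piro strategy adapted to the FML setting using the infrastructure built in Propositions \ref{tup}--\ref{po}. First, the easy direction: if $L \supseteq \text{FML}$ is equivalent to FML, then since FML has the Tarski union property (Proposition \ref{tup}), so does $L$. This is immediate because the property is about satisfaction of sentences along elementary chains, and equivalence of logics preserves which structures satisfy which sentences.

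For the substantive direction, suppose $L \supseteq \text{FML}$ is compact, bisimulation invariant, and has the TUP; I want to show $L \equiv \text{FML}$, i.e.\ every $L$-sentence $\phi$ is equivalent to some FML-sentence. Arguing by contradiction, suppose $\phi \in L$ has no FML-equivalent. By Proposition \ref{po} there exist pointed models $(\mathfrak{M},w)$ and $(\mathfrak{N},v)$ with $Th(\mathfrak{M},w) = Th(\mathfrak{N},v)$ (same FML-theory) but $\mathfrak{M},w \models_L \phi$ while $\mathfrak{N},v \models_L \neg\phi$. Using the tree model property noted before Proposition \ref{tup} together with bisimulation invariance of $L$, I may replace both models by bisimilar tree-like models of countable depth without changing which of $\phi$, $\neg\phi$ they satisfy. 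Then by Proposition \ref{sat}, both $(\mathfrak{M},w)$ and $(\mathfrak{N},v)$ have modally-saturated $L$-elementary extensions $(\mathfrak{M}^+,w)$ and $(\mathfrak{N}^+,v)$. Since these extensions are $L$-elementary, $\mathfrak{M}^+,w \models_L \phi$ and $\mathfrak{N}^+,v \models_L \neg\phi$; and since $L \supseteq \text{FML}$, $L$-elementary extensions are in particular FML-elementary, so $(\mathfrak{M}^+,w)$ and $(\mathfrak{N}^+,v)$ still have the same FML-theory, i.e.\ $(\mathfrak{M}^+,w) \equiv (\mathfrak{N}^+,v)$.

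Now both $(\mathfrak{M}^+,w)$ and $(\mathfrak{N}^+,v)$ are modally-saturated and FML-elementarily equivalent, so by the Hennessy--Milner theorem they are bisimilar: $(\mathfrak{M}^+,w) \rightleftarrows (\mathfrak{N}^+,v)$. By the bisimulation invariance of $L$, it follows that $\mathfrak{M}^+,w \models_L \phi$ iff $\mathfrak{N}^+,v \models_L \phi$ --- contradicting $\mathfrak{M}^+,w \models_L \phi$ and $\mathfrak{N}^+,v \models_L \neg\phi$. Hence no such $\phi$ exists, and $L \equiv \text{FML}$.

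The main obstacle I anticipate is ensuring the chain of model transformations keeps all three needed features simultaneously: tree-likeness of countable depth (needed to invoke Propositions \ref{real} and \ref{sat}), $L$-elementarity of the extensions (so that satisfaction of $\phi$ transfers up), and preservation of FML-elementary equivalence between the two sides (so that Hennessy--Milner applies at the top). The delicate point is that Proposition \ref{sat} produces an $L$-elementary \emph{extension} of a single model, but we must apply it to each of $(\mathfrak{M},w)$ and $(\mathfrak{N},v)$ separately and then argue that FML-equivalence of the originals is inherited by the saturated extensions --- this uses only that $L$-elementarity implies FML-elementarity (from $L \supseteq \text{FML}$) plus transitivity of $\equiv$. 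Also worth double-checking is that the tree-model reduction commutes with taking $L$-elementary extensions in the way Proposition \ref{sat} already handles via bisimulation invariance; once that bookkeeping is in place the argument is a clean instance of the Otto--Piro template.
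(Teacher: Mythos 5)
Your proposal is correct and follows essentially the same route as the paper's proof: Proposition \ref{po} to get FML-equivalent models disagreeing on $\phi$, bisimulation invariance to pass to tree-like models of countable depth, Proposition \ref{sat} for modally-saturated $L$-elementary extensions, the Hennessy--Milner theorem to obtain a bisimulation, and then a contradiction with bisimulation invariance of $L$. Your explicit treatment of the easy direction (that equivalence with FML transfers the TUP from Proposition \ref{tup}) is a small addition the paper leaves implicit, but the substance of the argument is identical.
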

\begin{proof}
Assume that there is a sentence $\phi$ in $L$ which is not equivalent to any formula of FML. By Proposition \ref{po}, there are two models $(\mathfrak{M},w)$ and $(\mathfrak{N},v)$  such that $Th(\mathfrak{M},w)=Th(\mathfrak{N},v)$  while $(\mathfrak{M},w)\models\phi$ and $(\mathfrak{N},v)\models\neg\phi$. By invariance under bisimulation, we may assume that $(\mathfrak{M},w)$ and $(\mathfrak{N},v)$ are two tree-like models where their frames have a countable depth. According to Proposition \ref{sat}, there are two saturated models $(\mathfrak{M}^*,w^*)$ and $(\mathfrak{N}^*,v^*)$ such that $Th_{L}(\mathfrak{M},w)=Th_{L}(\mathfrak{M}^*,w^*)$ and $Th_{L}(\mathfrak{N},v)=Th_{L}(\mathfrak{N}^*,v^*)$. It follows that $Th(\mathfrak{M}^*,w)=Th(\mathfrak{N}^*,v)$. Now by the Hennessy-Milner theorem $(\mathfrak{M}^*,w)\rightleftarrows(\mathfrak{N}^*,v)$. Therefore, $(\mathfrak{M}^*,w^*)\models\phi$ and $(\mathfrak{N}^*,v^*)\models\neg\phi$, contradicting invariance under bisimulations.
\end{proof}

\section{Future Research}

This paper is set out with the aim of extending two theorems with propositional nature to first-order modal logic. 

Theorem \ref{gt} provides a generalization of the Goldblatt-Thomason theorem and unravels the expressive power of first-order modal logic in axiomatizing  much broader elementary classes of frames. This result opens a new line of research in finding properties which guarantee that elementary classes of skeletons are first-order modally definable. For further reference in this line of work see  \cite{benthem:frame10} in which the correspondence theory is discussed for some special classes of skeletons.

Furthermore, restriction of Theorem \ref{gt} to constant domain  Kripke models gives raise to the question of finding some alternative results for varying domain Kripke models.

The second part of the present paper is devoted to proving some version of Lindstr\"om characterization theorem  for FML.

There are many ways of generalizing both Theorems \ref{lind} and \ref{lind-tarski}. 
We suggest to find a version of Theorem \ref{lind} which is true for varying domain Kripke models. On the other hand, one may substitute the semantics of first-order modal logic by neighborhood semantics and study the Lindstr\"om theorem in that context.

Our approach in this paper is model theoretic. Since the Goldblatt-Thomason and Lindstr\"om theorems both have coalgebraic forms (see \cite{kurz:golcoal07} and  \cite{venema:coalg10, enq:new14}), it is natural to look for the question of studying coalgebraic adaptation of these theorems.  

\paragraph{Acknowledgements.}
The authors would like to thank Mohsen Khani for carefully reading the paper and giving some useful remarks which helped us to improve the presentation of the paper.

\end{document}